\newtheorem{theorem}{Theorem}[section]
\newtheorem{corollary}[theorem]{Corollary}
\newtheorem{lemma}[theorem]{Lemma}
\newtheorem{proposition}[theorem]{Proposition}
\theoremstyle{definition}
\newtheorem{definition}[theorem]{Definition}
\newtheorem{example}[theorem]{Example}
\theoremstyle{remark}
\numberwithin{figure}{section}
\numberwithin{table}{section}
\begin{document}

\title[Discrete Topological Complexities of Simplical Maps]{Discrete Topological Complexities of Simplicial Maps}

\author{MEL\.{I}H \.{I}S}
\date{\today}

\address{\textsc{Melih Is,}
Ege University\\
Faculty of Sciences\\
Department of Mathematics\\
Izmir, Turkiye}
\email{melih.is@ege.edu.tr}

\author{\.{I}SMET KARACA}
\date{\today}

\address{\textsc{Ismet Karaca,}
	Ege University\\
	Faculty of Sciences\\
	Department of Mathematics\\
	Izmir, Turkiye}
\email{ismet.karaca@ege.edu.tr}

\subjclass[2010]{55M30, 55U10, 05E45}

\keywords{discrete topological complexity, higher topological complexity, simplicial LS-category, contiguity distance, simplicial fibration}

\begin{abstract}
 In this study, we delve into the discrete TC of surjective simplicial fibrations, aiming to unravel the interplay between topological complexity, discrete geometric structures, and computational efficiency. Moreover, we examine the properties of the discrete TC number in higher dimensions and its relationship with scat. We also touch on the basic properties of the notion of higher contiguity distance, and show that it is possible to consider discrete TC computations in a simpler sense.
\end{abstract}

\maketitle

%%%%%%%%%%%%%%%%%%%%%%%%%%%%%%%%%%%%%%%%%%%%%%%%%%%%%
\section{Introduction}
\label{intro}

\quad The discrete topological complexity (TC) of a space serves as a fundamental measure of capturing the intricacy of its motion-planning capabilities. Originating from the field of robotics, TC offers a quantitative framework to understand the computational complexity of designing feasible paths in a given space. Particularly, in algebraic topology, TC provides valuable insights into the structural characteristics of topological spaces and their associated mappings.

\quad The notion of the discrete topological complexity on simplicial complexes is first given in \cite{TerMacMinVil:2018} by using Farber subcomplexes. Theorem 3.4 of \cite{TerMacMinVil:2018} relates this characterization to contiguity distance, which is the discrete version of the concept of homotopic distance. The contiguity distance between simplicial maps is studied in \cite{BoratPamukVergili:2023}, and hence, some homotopy-related concepts, such as contractibility or having the same homotopy type, are transferred from topological spaces to simplicial complexes. With the help of these studies, it has now become possible to examine the problem of determining the TC number of a simplicial map via the contiguity distance. On the other hand, a fibration between simplicial complexes is introduced in \cite{TerCalMacVil:2021}. Moreover, in Theorem 8 of \cite{TerCalMacVil:2021}, the discrete TC number of a finite simplicial complex $L$ is presented by using the simplicial path-fibration $$PL \rightarrow L \times L.$$ In this study, we focus on investigating the TC of surjective simplicial fibrations (generally between finite complexes), a class of mappings that exhibit crucial properties in both algebraic topology and differential geometry. Surjective simplicial fibrations serve as essential tools for studying the topology of fiber bundles, providing a means to understand the interplay between base spaces and fibers. Our exploration of TC within this context aims to shed light on the computational complexity underlying the continuous deformation of spaces under surjective simplicial fibrations.

Understanding TC in the context of surjective simplicial fibrations entails a comprehensive analysis of discrete structures that underlie continuous mappings. By discretizing the domain and codomain of such mappings, we can effectively capture the essential geometric and topological features while providing a computationally tractable framework for analysis. Through this, we aim to unravel the intricate interplay between the topological complexity of the base space and the geometric properties of the fiber, elucidating how these factors collectively influence the TC of surjective simplicial fibrations. 

\quad This exploration consists of the following concepts: In Section \ref{sec:1}, we recall the basic properties of simplicial complexes and the important consequences of maps between simplicial complexes, especially simplicial fibrations. In Section \ref{sec:2}, we present the discrete topological complexity of a surjective fibration via the Schwarz genus of a simplicial fibration. This definition is enriched with different types of examples of simplicial complexes. We also generalize the notion of contiguity distance to use it effectively in other sections. The following two sections, Section \ref{sec:3} and \ref{sec:4}, deal with the generalized version of TC number computation of a simplicial complex and a surjective simplicial fibration. Furthermore, Section \ref{sec:5} is dedicated to the study of the relationship, in the discrete sense, between TC numbers and the Lusternik-Schnielmann category of simplicial complexes denoted by scat. 

\section{Preliminaries}
\label{sec:1}

\quad Simplicial complexes are fundamental structures in algebraic topology, providing a combinatorial framework for studying topological spaces. They are constructed from simple geometric elements called simplices, which are higher-dimensional analogs of triangles and tetrahedra. We now present the general properties of simplicial complexes or maps between them. 

\subsection{Simplicial Complex and Simplicial Homotopy}
\label{subsec:1}

A \textit{simplicial complex} $L$ is a set of simplexes in $\mathbb{R}^{n}$ which satifies
\begin{itemize}
	\item $\sigma \in L$ implies that $L$ has every face of $\sigma$, and
	\item $\sigma_{1}$, $\sigma_{2} \in L$ implies that the intersection $\sigma_{1} \cap \sigma_{2}$ is equal to either null or a common face of $\sigma_{1}$ and of $\sigma_{2}$ \cite{Rotman:2013}.
\end{itemize}

\quad If $L$ has a finite collection of simplexes that satisfies the above conditions, then we say that $L$ is a \textit{finite simplicial complex}. The \textit{vertex set} of a simplicial complex $L$ is defined by the collection of all points ($0-$simplexes) in $L$, and we denote it by VX$(L)$. Let $N$ and $L$ be any simplicial complexes. Then $N$ is called a \textit{subcomplex} of $L$ if $\sigma \in N$, then $\sigma \in L$ with the property VX$(N) \subset$ VX$(L)$ \cite{Rotman:2013}. 

\begin{definition}\cite{Rotman:2013}
	A map $\varphi : L \rightarrow L^{'}$ between any simplicial complexes $L$ and $L^{'}$ is called a \textit{simplicial map} provided that the map $\varphi : \text{VX}(L) \rightarrow \text{VX}(L^{'})$ has the property that $\sigma \in L$ implies $\varphi(\sigma) \in L^{'}$.
\end{definition}

\quad A simplicial map $\varphi : L \rightarrow L^{'}$ is called a \textit{simplicial isomorphism} if it is bijective, and the inverse $\varphi^{-1}$ is a simplicial map.

\quad Given two simplicial maps $\varphi_{1}$, $\varphi_{2} : L \rightarrow L^{'}$, they are said to be \textit{contiguous} provided that the fact $\sigma \in L$ is a simplex implies that $\varphi_{1}(\sigma) \cup \varphi_{2}(\sigma) \in L^{'}$ is also a simplex \cite{Spanier:1966}. The contiguity of two simplicial maps $\varphi_{1}$ and $\varphi_{2}$ are generally denoted by $\varphi_{1} \sim_{c} \varphi_{2}$. For simplicial maps, the contiguity is known as the homotopy counterpart and is defined so that various simplicial approximations to the same continuous map are contiguous. Note that being in the same contiguity class for simplicial complexes and simplicial maps can be thought of as the discrete form of homotopy.

\begin{definition}\cite{Spanier:1966}
	Given two simplicial maps $\varphi$, $\varphi^{'} : L \rightarrow L^{'}$, they are \textit{in the same contiguity class with $n$ steps} provided that there exists a sequence of simplical maps $\varphi_{i} : L \rightarrow L^{'}$ for $i = 0,\cdots,n$ that satisfes $\varphi_{i} \sim_{c} \varphi_{i+1}$ with $\varphi_{0} = \varphi$ and $\varphi_{n} = \varphi^{'}$.
\end{definition}

\quad The notation $\varphi \sim \varphi^{'}$ is generally used to express that two simplicial maps $\varphi$ and $\varphi^{'}$ are in the same contiguity class.

\begin{proposition}\cite{TerCalMacVil:2021}
	Assume that $\varphi$, $\varphi^{'} : L \rightarrow L^{'}$ are two simplicial maps. Then $\varphi \sim \varphi^{'}$ if and only if there is at least one $m \geq 1$ and one simplicial map $$G : L \times I_{m} \rightarrow L^{'}$$ with the property $G(\sigma,0) = \varphi$ and $G(\sigma,m) = \varphi^{'}$ for any $\sigma \in L$.
\end{proposition}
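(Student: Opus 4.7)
The plan is to unpack both directions directly from the definition of contiguity, using the product simplicial complex $L \times I_m$, where $I_m$ denotes the standard $1$-dimensional simplicial complex with vertex set $\{0,1,\ldots,m\}$ and $1$-simplices $\{i,i+1\}$ for $0 \le i \le m-1$. Recall that a collection of vertices $\{(v_{1},j_{1}),\ldots,(v_{k},j_{k})\}$ is a simplex of $L \times I_{m}$ precisely when $\{v_{1},\ldots,v_{k}\}$ is a simplex of $L$ and $\{j_{1},\ldots,j_{k}\}$ is a simplex of $I_{m}$ (that is, a vertex or a pair of consecutive integers). This combinatorial description is what makes both implications essentially a translation of the defining conditions.

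For the forward implication, suppose $\varphi_{0},\ldots,\varphi_{m}$ is a chain realising $\varphi \sim \varphi^{'}$ with $\varphi_{i} \sim_{c} \varphi_{i+1}$. I would define $G : L \times I_{m} \to L^{'}$ on vertices by $G(v,j) = \varphi_{j}(v)$ and then check that $G$ is simplicial. Let $\{(v_{1},j_{1}),\ldots,(v_{k},j_{k})\}$ be a simplex of $L \times I_{m}$. Then either all $j_{r}$ coincide, in which case the image $\varphi_{j}(\sigma)$ is a simplex of $L^{'}$ because $\varphi_{j}$ is simplicial; or the indices take exactly two consecutive values $i,i+1$, in which case the image is contained in $\varphi_{i}(\sigma) \cup \varphi_{i+1}(\sigma)$, which is a simplex of $L^{'}$ by the contiguity $\varphi_{i} \sim_{c} \varphi_{i+1}$. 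The boundary values $G(\cdot,0)=\varphi$ and $G(\cdot,m)=\varphi^{'}$ are immediate.

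For the converse, given a simplicial map $G : L \times I_{m} \to L^{'}$ with the stated endpoint conditions, I would set $\varphi_{j}(v) = G(v,j)$ for $j=0,\ldots,m$. Simpliciality of each $\varphi_{j}$ follows by restricting $G$ to the subcomplex $L \times \{j\}$, which is simplicially isomorphic to $L$. To see that $\varphi_{j} \sim_{c} \varphi_{j+1}$, for any simplex $\sigma = \{v_{1},\ldots,v_{k}\} \in L$ the set $\sigma \times \{j,j+1\}$ is a simplex of $L \times I_{m}$, so its image $\varphi_{j}(\sigma) \cup \varphi_{j+1}(\sigma)$ lies in $L^{'}$. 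Concatenating these contiguities produces the required chain from $\varphi_{0}=\varphi$ to $\varphi_{m}=\varphi^{'}$.

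The argument is essentially a bookkeeping exercise, so the only genuine subtlety I foresee is ensuring that the simplicial product $L \times I_{m}$ is set up so that the $1$-simplices of $I_{m}$ are exactly the consecutive pairs $\{i,i+1\}$; this is what aligns the ``two-level'' simplices of $L \times I_{m}$ with pairs of contiguous maps. Once that convention is fixed, both directions reduce to reading off the contiguity condition on the pairs of adjacent levels, and nothing deeper than the definition of a simplicial map is required.
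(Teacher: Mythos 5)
Your argument is correct: both directions are exactly the standard translation between a contiguity chain and a simplicial map on the product, and your combinatorial description of the simplices of $L \times I_{m}$ agrees with the categorical product convention the paper fixes in Section \ref{sec:1} (a set of vertex pairs is a simplex iff both projections are simplices). The paper itself states this proposition as a cited result from \cite{TerCalMacVil:2021} without reproducing a proof, and your construction $G(v,j)=\varphi_{j}(v)$ (and its inverse reading $\varphi_{j}=G(\cdot,j)$) is precisely the standard argument behind that reference, so nothing further is needed.
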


\quad Assume that $L$ and $L^{'}$ are two simplicial complexes. Then their \textit{categorical product} $L \ \Pi \ L^{'}$ is defined as follows \cite{Kozlov:2008}:
\begin{itemize}
	\item For any vertex $v_{1} \in L$ and $v_{2} \in L^{'}$, the vertices of $L \ \Pi \ L^{'}$ are the pairs $(v_{1},v_{2})$, i.e., $$\text{VX}(L \ \Pi \ L^{'}) = \text{VX}(L) \times \text{VX}(L^{'}).$$
	\item For the projections $\pi_{1} : L \ \Pi \ L^{'} \rightarrow L$, $\pi_{2} : L \ \Pi \ L^{'} \rightarrow L^{'}$, we have that $\sigma \in L \ \Pi \ L^{'}$ if and only if $\pi_{1}(\sigma) \in L$ and $\pi_{2}(\sigma) \in L^{'}$.
\end{itemize}

\quad We use the notation $K \times L$ for the categorical product of simplicial complexes throughout the paper. For instance, $L^{2} = L \times L = L \ \Pi \ L$. 

\quad Strong homotopy type and contractibility for topological spaces are transferred to simplicial complexes as follows. Let $L$ and $N$ be two simplicial complexes. Then they \textit{have the same strong homotopy type} if and only if there exist two simplicial maps $\varphi : L \rightarrow N$ and $\omega : N \rightarrow L$ with $\varphi \circ \omega \sim 1_{N}$ and $\omega \circ \varphi \sim 1_{L}$ \cite{BarmakMinian:2012}. Also, $\varphi$ and $\omega $ are called the \textit{strong equivalences}. Let $v$ be a vertex in a simplicial complex $L$. Then $L$ is called \textit{strongly collapsible} if $L$ and $v$ have the same strong homotopy type.

\subsection{Simplicial Fibration and Discrete TC Number}
\label{subsec:2}

In \cite{TerCalMacVil:2021}, we have three equivalent definitions of the notion of a simplicial fibration. Since we would like to compute the discrete topological complexity of simplicial maps (actually surjective fibrations), it is essential to define a simplicial fibration. We prefer to use Type III of \cite{TerCalMacVil:2021} because it is almost the same as the fibrations defined with the help of homotopy in topological spaces.

\begin{definition}\cite{TerCalMacVil:2021}\label{def3}
	Let $\varphi : L \rightarrow L^{'}$ be a simplicial map. Then $\varphi$ is said to be a \textit{simplicial fibration} if for an inclusion map $i^{m} : N \times \{0\} \rightarrow N \times I_{m}$, and any simplicial maps $g : N \times \{0\} \rightarrow L$ and $G : N \times I_{m} \rightarrow L^{'}$ with $\varphi \circ g = G \circ i^{m}$, there exists a simplicial map $$\widetilde{G} : N \times I_{m} \rightarrow L$$ for which $\widetilde{G} \circ i_{m} = g$ and $\varphi \circ \widetilde{G} = G$.
\end{definition}

\quad In a special case in Definition \ref{def3}, if $N$ is finite, then $\varphi$ is called a \textit{simplicial finite-fibration}. Simplicial fibrations have some important properties. For example, any simplicial isomorphism is a simplicial fibration. Moreover, each of the composition, the pullback, and the cartesian product of simplicial fibrations is again a simplicial fibration \cite{TerCalMacVil:2021}. Another important example given by Theorem 1 of \cite{TerCalMacVil:2021}:

\begin{theorem}\cite{TerCalMacVil:2021}
	For any simplicial complex $L$, the map $\pi : PL \rightarrow L \times L$, defined by taking any simplicial path on $L$ to the pair of initial-desired vertices of $L$, is a simplicial finite-fibration.
\end{theorem}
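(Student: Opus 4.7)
The plan is to adapt the classical topological argument that the path-space fibration has the homotopy lifting property to the simplicial setting. I would fix an instance of the lifting problem of Definition \ref{def3}: a finite simplicial complex $N$, together with simplicial maps $g : N \times \{0\} \rightarrow PL$ and $G : N \times I_{m} \rightarrow L \times L$ satisfying $\pi \circ g = G \circ i^{m}$. Writing $G = (G_{1}, G_{2})$ via the two projections $L \times L \rightarrow L$, the compatibility condition says that for each vertex $v \in N$, the path $g(v,0)$ runs in $L$ from $G_{1}(v,0)$ to $G_{2}(v,0)$.

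The lift $\widetilde{G}$ I would construct by concatenation. For a vertex $(v,t) \in N \times I_{m}$, define $\widetilde{G}(v,t)$ to be the simplicial path in $L$ obtained by splicing together three segments: the reverse of the track $s \mapsto G_{1}(v,s)$ for $s$ running from $t$ down to $0$, then the initial path $g(v,0)$, then the forward track $s \mapsto G_{2}(v,s)$ for $s$ running from $0$ up to $t$. The endpoints of this concatenation are $G_{1}(v,t)$ and $G_{2}(v,t)$, so the identity $\pi \circ \widetilde{G} = G$ holds at the vertex level. At $t = 0$ the flanking tracks are trivial, leaving $\widetilde{G}(v,0) = g(v,0)$, which gives $\widetilde{G} \circ i^{m} = g$ as required.

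The main work is to verify that $\widetilde{G}$ is itself simplicial, i.e.\ sends each simplex of $N \times I_{m}$ to a simplex of $PL$. By the description of the categorical product, every such simplex sits inside some $\sigma \times \{t, t+1\}$ with $\sigma \in N$ and $t, t+1$ adjacent vertices of $I_{m}$, so it suffices to inspect one such product. Because $G_{1}$, $G_{2}$ and $g$ are simplicial, the endpoints of the three concatenated pieces over the vertices of $\sigma \times \{t, t+1\}$ differ only by one-step extensions whose new endpoints span simplices of $L$. The hard part will be to match this local data against the combinatorial definition of a simplex of $PL$ used in \cite{TerCalMacVil:2021}, in particular to check that reversal and concatenation of simplicial paths of varying lengths behave compatibly with the simplex structure of $PL$. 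Once that bookkeeping is carried out, the simplicial condition follows directly and the construction produces the required lift for every finite $N$, establishing that $\pi$ is a simplicial finite-fibration.
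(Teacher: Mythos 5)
Note first that the paper you are working from does not actually prove this statement; it is quoted from Theorem~1 of \cite{TerCalMacVil:2021}, and the verification you postpone at the end of your proposal is exactly the content of that reference's proof. Unfortunately, that postponed ``bookkeeping'' is where the argument fails as you have set it up. Your splice is not synchronized: for the two levels $t$ and $t+1$ the three segments of $\widetilde{G}(v,t)$ and of $\widetilde{G}(w,t+1)$ occupy parameter intervals shifted by one unit (and of different total lengths), so when you test the image of a simplex of $\sigma\times\{t,t+1\}$ against the simplex condition in $PL$ you are forced to require that values of the tracks of $G_{1}$, of $g(\cdot,0)$, or of $G_{2}$ at \emph{three} consecutive parameter times, taken over all of $\sigma$, span a simplex of $L$. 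That does not follow from the simpliciality of $g$ and $G$, and it fails in tiny examples: take $L$ the path graph $x_{0}-x_{1}-x_{2}$, $N$ a single vertex $v$, $m=2$, $G_{1}(v,j)=x_{j}$, $G_{2}$ constant at $x_{0}$ and $g(v,0)$ the constant path at $x_{0}$. Your lift gives $\widetilde{G}(v,1)=(x_{1},x_{0},x_{0},\dots)$ and $\widetilde{G}(v,2)=(x_{2},x_{1},x_{0},\dots)$, and the simplex $\{(v,1),(v,2)\}$ of $N\times I_{2}$ is sent to a pair of paths whose values over the consecutive times $0,1$ are $\{x_{0},x_{1},x_{2}\}$, which is not a simplex of $L$; an analogous one-step shift between the middle segments $g(v,0)$ and $g(w,0)$ over an edge $\{v,w\}$ of $N$ defeats the construction even for $m=1$, whichever of the natural combinatorial structures on $PL$ one uses. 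So the simplicial condition does not ``follow directly'' once the local data are matched; the unsynchronized concatenation genuinely does not produce a simplicial map, and repairing it (by reparametrizing so that corresponding segments occupy the same parameter interval for every vertex, with constant padding) is the heart of the proof, not a routine check.

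A second symptom of the gap is that your argument never uses the finiteness of $N$, although the statement only claims that $\pi$ is a \emph{finite}-fibration. In the Moore-path model the paths $g(v,0)$ are eventually constant but of varying effective lengths, so to append the $G_{2}$-tracks and keep all lifted paths on a common schedule one must first choose a single time $M$ beyond which $g(v,0)$ is constant for every $v\in \mathrm{VX}(N)$ and pad all concatenations up to that common length; such a uniform $M$ exists precisely because $N$ is finite. Any correct proof must make this synchronization and this use of finiteness explicit; as written, your proposal identifies the right classical template but leaves the genuinely simplicial part of the argument unproved, and in the form stated it is false.
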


\quad The simplicial Schwarz genus and the contiguity distance are two different ways to state the discrete TC of a simplicial complex when we have simplicial fibrations. Hence, we now continue with presenting these two concepts.

\begin{definition}\cite{TerCalMacVil:2021}
	Let $\varphi : L \rightarrow L^{'}$ be a simplicial map. Then the simplicial Schwarz genus of $\varphi$ is the least integer $n \geq 0$ if the following properties hold:
	\begin{itemize}
		\item $L^{'}$ can be written as the union of subcomplexes $L_{0},L_{1},\cdots,L_{n}$.
		\item For each $k \in \{0,\cdots,n\}$, $\varphi$ admits a simplical map $\sigma_{k} : L_{k} \rightarrow L$ with the property $\varphi \circ \sigma_{k} = 1_{L_{k}}$.
	\end{itemize} 
\end{definition}

\quad The simplicial Schwarz genus of $\varphi$ is denoted by Sg$(\varphi)$.

\begin{definition}\cite{BoratPamukVergili:2023}
	Let $\varphi_{1}$, $\varphi_{2} : L \rightarrow L^{'}$ be two simplicial maps. Then the contiguity distance between $\varphi_{1}$ and $\varphi_{2}$ is the least integer $n \geq 0$ if the following properties hold:
		\begin{itemize}
		\item $L$ can be written as the union of subcomplexes $L_{0},L_{1},\cdots,L_{n}$.
		\item For all $k \in \{0,\cdots,n\}$, $\varphi_{1}\big|_{L_{k}}$ and $\varphi_{2}\big|_{L_{k}}$ are in the same contiguity class.
	\end{itemize} 
\end{definition}

\quad The contiguity distance between $\varphi_{1}$ and $\varphi_{2}$ is denoted by SD$(\varphi_{1},\varphi_{2})$.

\quad We are now ready to give the discrete TC number and the simplicial Lusternik-Schnirelmann category based on the contiguity distance as follows.

\begin{proposition}\cite{BoratPamukVergili:2023}
	Let $p_{i}$ be the $i-$th simplicial projection map on $L$ for each $i \in \{1,2\}$, and $c_{v_{0}}$ any simplicial constant map on $L$, where $v_{0}$ is any vertex of $L$. Assume that $i_{1} : L \rightarrow L^{2}$ and $i_{2} : L \rightarrow L^{2}$ are simplicial maps defined by $i_{1}(\sigma) = (\sigma,v_{0})$ and $i_{2}(\sigma) = (v_{0},\sigma)$, respectively. Then
	
	\textbf{i)} TC$(L) =$ SD$(p_{1},p_{2})$.
	
	\textbf{ii)} scat$(L) =$ SD$(1_{L},c_{v_{0}}) =$ SD$(i_{1},i_{2})$.
	
	\textbf{iii)} scat$(\varphi) =$ SD$(\varphi,\varphi \circ c_{v_{0}})$.
\end{proposition}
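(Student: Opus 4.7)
The plan is to mimic the Farber--Schwarz correspondence between sectional category and homotopic distance in the simplicial setting, exploiting the fact that a simplicial section of the path-fibration $\pi\colon PL\to L^{2}$ over a subcomplex is the same datum as a simplicial contiguity-class homotopy between the two projections restricted to that subcomplex. Each of the three items is then a direct unpacking of definitions along this correspondence.

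For \textbf{(i)}, I would start from the identification TC$(L)=$ Sg$(\pi)$ given by Theorem 8 of \cite{TerCalMacVil:2021}. A decomposition $L^{2}=L_{0}\cup\cdots\cup L_{n}$ together with simplicial sections $\sigma_{k}\colon L_{k}\to PL$ assigns to each simplex of $L_{k}$ a simplicial path in $L$ between its two coordinate vertices. By the earlier proposition characterizing contiguity via simplicial maps out of $L_{k}\times I_{m}$, this is equivalent to a simplicial map $G_{k}\colon L_{k}\times I_{m_{k}}\to L$ with $G_{k}(-,0)=p_{1}|_{L_{k}}$ and $G_{k}(-,m_{k})=p_{2}|_{L_{k}}$, i.e., $p_{1}|_{L_{k}}\sim p_{2}|_{L_{k}}$. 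This gives Sg$(\pi)\geq$ SD$(p_{1},p_{2})$, and reading the correspondence backwards yields the reverse inequality.

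Part \textbf{(ii)} applies the same template to the based path-fibration $\pi_{0}\colon P_{0}L\to L$ to give scat$(L)=$ SD$(1_{L},c_{v_{0}})$. For the second equality, I would transport homotopies along the two projections: given a contiguity $H\colon L_{k}\times I_{m}\to L^{2}$ between $i_{1}|_{L_{k}}$ and $i_{2}|_{L_{k}}$, the composite $p_{1}\circ H$ supplies a contiguity between $p_{1}\circ i_{1}=1_{L}|_{L_{k}}$ and $p_{1}\circ i_{2}=c_{v_{0}}|_{L_{k}}$, yielding one inequality. Conversely, from a contiguity $H\colon L_{k}\times I_{m}\to L$ between $1_{L}|_{L_{k}}$ and $c_{v_{0}}|_{L_{k}}$ one builds $i_{1}\sim i_{2}$ on the same $L_{k}$ by concatenating $(H,c_{v_{0}})$ with the time-reversed $(c_{v_{0}},H)$, both of which pass through the common map $(c_{v_{0}},c_{v_{0}})$. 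Part \textbf{(iii)} is the same observation applied to an arbitrary simplicial map: $\varphi\circ c_{v_{0}}$ is the constant map at $\varphi(v_{0})$, so SD$(\varphi,\varphi\circ c_{v_{0}})$ is precisely the smallest $n$ for which $L$ decomposes into $n+1$ subcomplexes on each of which $\varphi$ is nullcontiguous, which is the discrete category of a simplicial map.

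The main technical nuisance will be the bookkeeping of the step-count parameter $m$ when converting between sections of $PL$ and simplicial homotopies. Different sections $\sigma_{k}$ may record simplicial paths of varying combinatorial lengths, whereas a single contiguity homotopy requires a fixed $I_{m}$; one resolves this by padding shorter paths with degenerate (constant) segments and checking that the result remains simplicial on $L_{k}\times I_{m}$. Similarly, the concatenation argument in \textbf{(ii)} needs a reparametrization to a common step-count. Once this normalization is in place, the three equalities follow diagrammatically from the identification between vertices of $PL$ and simplicial paths in $L$.
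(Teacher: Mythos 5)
This proposition is quoted from \cite{BoratPamukVergili:2023}; the paper you are working from gives no proof of it, so there is nothing internal to compare against, and your argument has to be judged against the cited sources. Your proposal is essentially correct, but it takes a heavier route than necessary. In \cite{TerMacMinVil:2018} and \cite{BoratPamukVergili:2023}, TC$(L)$ is defined (or characterized) by covers of $L^{2}$ by ``Farber'' subcomplexes on which the two projections are in the same contiguity class, and scat$(L)$, scat$(\varphi)$ by covers of $L$ by subcomplexes on which the identity, respectively $\varphi$, is in the contiguity class of a constant map; so (i), the first equality of (ii), and (iii) amount to unwinding these definitions (with edge-path connectedness used to move the constant vertex to $v_{0}$, and to $\varphi(v_{0})$ in (iii)). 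Your detour through Sg$(\pi\colon PL\to L^{2})$ via Theorem 8 of \cite{TerCalMacVil:2021} is legitimate and mirrors the conversion between sections and contiguity homotopies that this paper itself carries out later for TC$(\varphi)$ in Theorem \ref{teo1}; the padding of paths to a common step length and the exponential correspondence for maps $N\times I_{m}\to L$ versus $N\to L^{I_{m}}$ work exactly as you say. The one genuine caution is in (ii): you invoke a ``based path-fibration $P_{0}L\to L$'' whose Schwarz genus equals scat$(L)$, and that discrete statement is not among the cited results, so as written that step rests on an unproved analogue; it is both safer and shorter to get scat$(L)=$ SD$(1_{L},c_{v_{0}})$ directly from the definition of categorical subcomplexes. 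Your argument for SD$(1_{L},c_{v_{0}})=$ SD$(i_{1},i_{2})$ is fine: composing a contiguity between $i_{1}|_{L_{k}}$ and $i_{2}|_{L_{k}}$ with $p_{1}$ gives one inequality, and the concatenation $(H,c_{v_{0}})$ followed by the reversed $(c_{v_{0}},H)$ is simplicial into the categorical product because a vertex map into $L^{2}$ is simplicial precisely when both coordinates are, giving the other.
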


\quad In computations of TC and scat, we always assume that a given simplicial complex is edge-path connected to make them considerable.

\section{Schwarz Genus Form and Higher Contiguity Distance}
\label{sec:2}

\quad For a surjective simplicial map $\varphi : L \rightarrow L^{'}$ between any finite simplicial complexes $L$ and $L^{'}$, define a new surjective simplicial map $$\pi_{\varphi} : L^{I} \rightarrow L \times L^{'}$$ by $\pi_{\varphi}(\delta) = ((1_{L} \times \varphi) \circ \pi)(\delta) = (\delta(0),\varphi(\delta(1))) $ for all $\varphi \in L^{I}$. Assume that $\varphi$ is a simplicial fibration. Then, by using Proposition 4.4 and 4.1 iii) of \cite{TerCalMacVil:2021}, $\pi_{\varphi}$ is also a simplicial fibration.

\begin{definition}
	The discrete topological complexity TC$(\varphi)$ of a simplicial finite-fibration $\varphi : L \rightarrow L^{'}$ is Sg$(\pi_{\varphi})$.
\end{definition}

\begin{example}
	\textbf{i)} TC$(\varphi)$ generalizes TC$(L)$. Indeed, for the particular case of $\varphi = 1_{L}$, we observe that TC$(1_{L}) =$ TC$(L)$.
	
	\textbf{ii)} The discrete topological complexity of a constant simplicial fibration is null, i.e., TC$(\varphi : L \rightarrow \{s_{0}\}) = 0$, where $s_{0}$ is a $0-$simplex (see Example 3.2 of \cite{iskaraca:2021} for a similar construction in digital images). Note that TC$(\varphi^{'})$ cannot be computed for $\varphi^{'} : L \rightarrow L^{'}$, defined by $\varphi^{'}(\delta) = \{s_{0}\} \in L^{'}$, because $\varphi^{'}$ is not surjective.
	
	\textbf{iii)} The discrete topological complexity of a first projection map is null, i.e., TC$(\varphi_{pr_{1}} : L \times L^{'} \rightarrow L) = 0$ (see Example 3.3 of \cite{iskaraca:2021} for a similar construction in digital images).
\end{example}

\begin{figure}[h]
	\centering
	\includegraphics[width=0.95\textwidth]{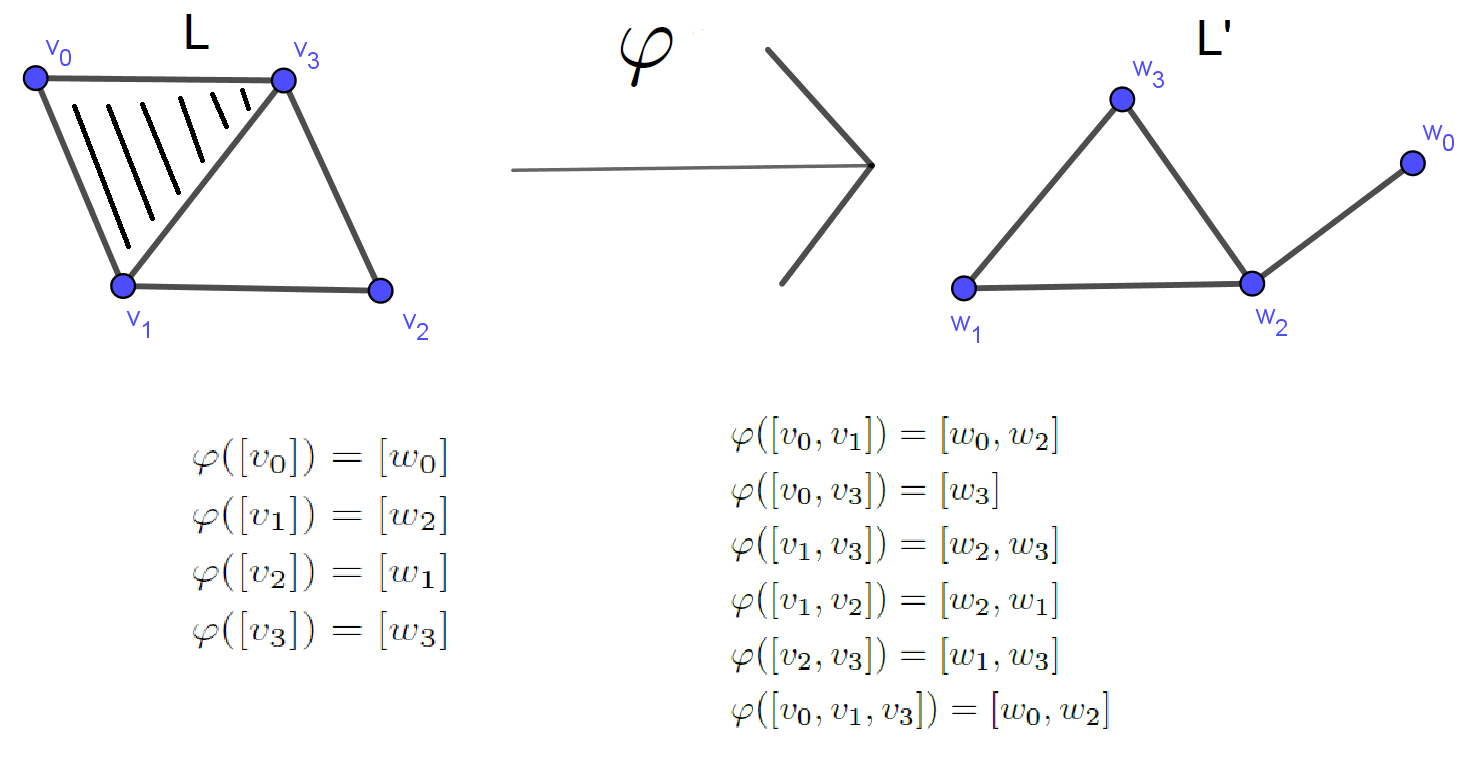}
	\caption{A simplicial map $\varphi : L \rightarrow L^{'}$.}
	\label{fig:1}
\end{figure}

\begin{example}
	Consider a simplicial map $\varphi : L \rightarrow L^{'}$ defined in Figure \ref{fig:1}. Obviously, it is a bijective simplicial map. If one defines the inverse of $\varphi$ from $L^{'}$ to $L$ as
	\begin{eqnarray*}
		&&[w_{0}] \mapsto [v_{0}]\\
		&&[w_{1}] \mapsto [v_{2}]\\
		&&[w_{2}] \mapsto [v_{1}]\\
	    &&[w_{3}] \mapsto [v_{3}],\\	
	\end{eqnarray*}
    and
	\begin{eqnarray*}
		&&[w_{0},w_{2}] \mapsto [v_{0},v_{1}]\\
		&&[w_{1},w_{2}] \mapsto [v_{2},v_{1}]\\
		&&[w_{1},w_{3}] \mapsto [v_{2},v_{3}]\\
		&&[w_{2},w_{3}] \mapsto [v_{1},v_{3}],\\	
	\end{eqnarray*}
	then $\varphi$ is a simplicial isomorphism. By Proposition 4 i) of \cite{TerCalMacVil:2021}, this concludes that $\varphi$ is a simplicial fibration. Define a simplicial fibration $\pi_{\varphi} : L^{I} \rightarrow L \times L^{'}$ by $\pi_{\varphi}(\delta) = (\delta(0),\varphi(\delta(1)))$. The set $L^{'}$ can be written as the union of $L_{0}$ and $L_{1}$ as in Figure \ref{fig:2}. Therefore, we get
	\begin{eqnarray*}
		L \times L^{'} = (L \times L_{0}) \cup (L \times L_{1}).
	\end{eqnarray*}
    In addition, $\pi_{\varphi}$ admits two simplicial maps $\sigma_{1} : L \times L_{0} \rightarrow L^{I}$ and $\sigma_{2} : L \times L_{1} \rightarrow L^{I}$ defined by $\sigma_{1}([a],[b]) = \alpha$ and $\sigma_{2}([c],[d]) = \beta$, respectively, with the property $\pi_{\varphi} \circ \sigma_{1}$ and $\pi_{\varphi} \circ \sigma_{2}$ is the inclusion map on $L \times L_{i}$ for each $i = 0,1$, where $\alpha$ is a simplicial path from $a$ to $b$ in $L$ and $\beta$ is a simplicial path from $c$ to $d$ in $L$. Consequently, we obtain TC$(\varphi) = 2$.
\end{example}

\begin{figure}[h]
	\centering
	\includegraphics[width=0.80\textwidth]{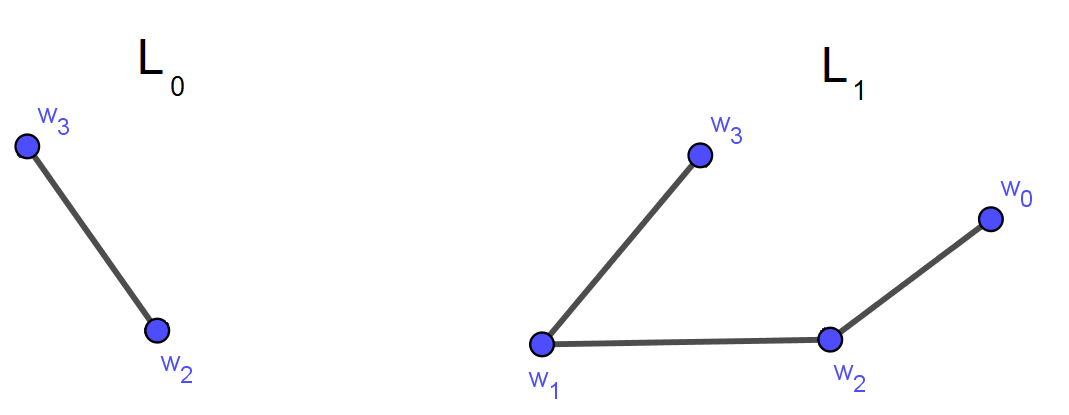}
	\caption{The subcomplexes $L_{0}$ and $L_{1}$ of $L^{'}$.}
	\label{fig:2}
\end{figure}

\quad Similar to the homotopic distance between maps, the notion of contiguity distance between simplicial complexes can be generalized as a higher contiguity distance between simplicial complexes.  

\begin{definition}\label{def1}
	Let $\varphi_{1},\cdots,\varphi_{m} : L \rightarrow L^{'}$ be simplicial maps. Then the higher ($n-$th) contiguity distance SD$(\varphi_{1},\cdots,\varphi_{m})$ is the least integer $n \geq 0$ for which there is a set of subcomplexes $L_{0},L_{1},\cdots,L_{n}$ that covers $L$ with the property that $\varphi_{i}|_{L_{k}}$ and $\varphi_{j}|_{L_{k}}$ are in the same contiguity class for all $i,j \in \{1,\cdots,m\}$ and $k = 0,1,\cdots,n$.
\end{definition}

\quad We have some quick observations from Definition \ref{def1}. First one states that the order of simplicial maps $\varphi_{1},\cdots,\varphi_{m}$ does not change the result of SD$(\varphi_{1},\cdots,\varphi_{m})$. More precisely, for any permutation $\sigma$ of $\{1,\cdots,m\}$, we have that
\begin{eqnarray*}
	\text{SD}(\varphi_{1},\cdots,\varphi_{m}) = \text{SD}(\varphi_{\sigma_{(1)}},\cdots,\varphi_{\sigma_{(m)}}).
\end{eqnarray*}

Second, by letting $1 < m^{'} < m$, we observe that 
\begin{eqnarray*}
	\text{SD}(\varphi_{1},\cdots,\varphi_{m^{'}}) \leq \text{SD}(\varphi_{1},\cdots,\varphi_{m})
\end{eqnarray*} 
for any simplicial maps $\varphi_{1},\cdots,\varphi_{m} : L \rightarrow L^{'}$. Moreover, we have that
\begin{eqnarray*}
	\text{SD}(\varphi_{1},\cdots,\varphi_{m}) = 0
\end{eqnarray*} 
iff $\varphi_{i} \sim \varphi_{i+1}$ for each $i \in \{1,\cdots,m\}$.

\quad The following properties of the higher contiguity distance are generalizations of the properties in \cite{BoratPamukVergili:2023} and, in parallel, proofs can be easily obtained using the same methods in \cite{BoratPamukVergili:2023}.

\begin{proposition}\label{prop2}
	\textbf{i)} Let $\varphi_{i},\varphi^{'}_{i} : L \rightarrow L^{'}$ be simplicial maps for all $i = 1,\cdots,m$. If $\varphi_{i} \sim \varphi^{'}_{i}$ for each $i$, then SD$(\varphi_{1},\cdots,\varphi_{m}) =$ SD$(\varphi^{'}_{1},\cdots,\varphi^{'}_{m})$.
	
	\textbf{ii)} Let $\varphi_{i} : L \rightarrow L^{'}$ be any simplicial maps for all $i = 1,\cdots,m$. If $L$ or $L^{'}$ is strongly collapsible, then SD$(\varphi_{1},\cdots,\varphi_{m}) = 0$.
\end{proposition}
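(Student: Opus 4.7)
The plan for part (i) is to establish that SD is invariant under replacing each map by a contiguity-equivalent one. The key technical fact is a restriction principle: if $\varphi \sim \psi$ via a contiguity sequence $\varphi = \psi_{0} \sim_{c} \psi_{1} \sim_{c} \cdots \sim_{c} \psi_{r} = \psi$, then for every subcomplex $N \subseteq L$ the restricted sequence $\psi_{0}|_{N}, \psi_{1}|_{N}, \ldots, \psi_{r}|_{N}$ still witnesses $\varphi|_{N} \sim \psi|_{N}$, since the single-step condition ``$\psi_{s}(\sigma) \cup \psi_{s+1}(\sigma) \in L^{'}$'' for all simplices $\sigma \in L$ continues to hold for all simplices of $N \subseteq L$. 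Given a cover $L_{0}, \ldots, L_{n}$ of $L$ witnessing SD$(\varphi_{1}, \ldots, \varphi_{m}) = n$, I would then chain, on each $L_{k}$, the equivalences $\varphi_{i}^{'}|_{L_{k}} \sim \varphi_{i}|_{L_{k}} \sim \varphi_{j}|_{L_{k}} \sim \varphi_{j}^{'}|_{L_{k}}$, using the hypothesis $\varphi_{i} \sim \varphi_{i}^{'}$ in combination with the restriction principle and transitivity of $\sim$. This yields SD$(\varphi_{1}^{'}, \ldots, \varphi_{m}^{'}) \leq n$, and the reverse inequality follows by the symmetric role of the two families.

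For part (ii), I would split into cases according to which side is collapsible. If $L^{'}$ is strongly collapsible, then $1_{L^{'}} \sim c_{v^{'}}$ for some vertex $v^{'} \in L^{'}$; left-composing with each $\varphi_{i}$ and using that composition preserves the $\sim_{c}$-relation (and hence its transitive closure $\sim$) gives $\varphi_{i} = 1_{L^{'}} \circ \varphi_{i} \sim c_{v^{'}} \circ \varphi_{i} = c_{v^{'}}$, the \textbf{same} constant map independent of $i$. By transitivity all $\varphi_{i}$ lie in a single contiguity class, so the trivial cover $L_{0} = L$ forces SD $= 0$. If instead $L$ is strongly collapsible, then $1_{L} \sim c_{v}$ for some $v \in L$, and right-composition gives $\varphi_{i} = \varphi_{i} \circ 1_{L} \sim \varphi_{i} \circ c_{v} = c_{\varphi_{i}(v)}$. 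These constants need not coincide, but under the standing edge-path connectedness assumption on $L^{'}$ one can interpolate between $\varphi_{i}(v)$ and $\varphi_{j}(v)$ by an edge-path, producing a contiguity chain between the corresponding constant maps, hence $\varphi_{i} \sim \varphi_{j}$ on all of $L$.

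The main obstacle, modest as it is, is the bookkeeping verification that pre- and post-composition with a fixed simplicial map preserve the multi-step relation $\sim$, since this is what propagates the single fact $1_{L} \sim c_{v}$ (or $1_{L^{'}} \sim c_{v^{'}}$) to all the $\varphi_{i}$ at once. This is precisely the routine check already carried out implicitly in \cite{BoratPamukVergili:2023} for the two-map case, and the passage from $m = 2$ to general $m$ adds only an extra indexing step in the transitivity argument; I do not anticipate any essentially new difficulty.
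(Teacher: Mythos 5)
Your argument is correct and is exactly the route the paper intends: the paper gives no explicit proof of this proposition, deferring to the two-map arguments of \cite{BoratPamukVergili:2023}, and your restriction-plus-transitivity chain for (i) and the composition-with-$1\sim c$ argument for (ii) (including the correct observation that the domain-collapsible case needs the standing edge-path connectedness of $L^{'}$ to link the constants $c_{\varphi_{i}(v)}$) is precisely that generalization. No gaps to report.
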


\begin{lemma}\label{lem1}
	Let $\psi_{i} \sim \psi_{i+1}$ for any $i = 1,\cdots,m$. Assume that $\psi_{i}$ admits a simplicial map $\mu_{i}$ such that $\mu_{i} \circ \psi_{i} \sim 1$ (or $\psi_{i} \circ \mu_{i} \sim 1$) for each $i = 1,\cdots,m+1$. Then $\mu_{i} \sim \mu_{i+1}$ for all $i = 1,\cdots,m$.
\end{lemma}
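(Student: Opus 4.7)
The plan is to adopt the classical ``insert the identity and slide'' argument from homotopy theory, translated to the contiguity setting. The essential ingredient is the elementary observation that contiguity respects composition on both sides: if $\alpha \sim \alpha^{'}$, then $\beta \circ \alpha \sim \beta \circ \alpha^{'}$ and $\alpha \circ \gamma \sim \alpha^{'} \circ \gamma$ whenever these compositions are defined. This follows immediately from the definition of contiguity class as a finite chain of contiguous maps (apply $\beta$ or precompose $\gamma$ step by step to the witnessing sequence), and I would record it as a preliminary observation before beginning the main computation.

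Fix $i \in \{1,\ldots,m\}$. I would then produce the chain of contiguity equivalences
\begin{align*}
\mu_{i} &\sim \mu_{i} \circ 1_{L^{'}} \sim \mu_{i} \circ (\psi_{i+1} \circ \mu_{i+1}) = (\mu_{i} \circ \psi_{i+1}) \circ \mu_{i+1} \\
&\sim (\mu_{i} \circ \psi_{i}) \circ \mu_{i+1} \sim 1_{L} \circ \mu_{i+1} = \mu_{i+1}.
\end{align*}
The first $\sim$ replaces $1_{L^{'}}$ by $\psi_{i+1} \circ \mu_{i+1}$; the middle $\sim$ is the right composition of the given contiguity $\psi_{i+1} \sim \psi_{i}$ with $\mu_{i+1}$; and the final $\sim$ uses $\mu_{i} \circ \psi_{i} \sim 1_{L}$. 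Transitivity of $\sim$ then gives $\mu_{i} \sim \mu_{i+1}$, as required. The ``or'' case, in which $\psi_{j} \circ \mu_{j} \sim 1_{L^{'}}$ is the displayed hypothesis, is handled by the symmetric chain beginning $\mu_{i} \sim 1_{L} \circ \mu_{i} \sim (\mu_{i+1} \circ \psi_{i+1}) \circ \mu_{i}$ and then sliding $\psi_{i+1}$ across to $\psi_{i}$ via $\psi_{i} \sim \psi_{i+1}$ before collapsing with $\psi_{i} \circ \mu_{i} \sim 1_{L^{'}}$.

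The calculation itself is essentially mechanical, so I do not expect a genuinely hard step; the main thing demanding care is bookkeeping about which of the two identity maps $1_{L}$ or $1_{L^{'}}$ appears at each stage, and making sure the contiguity is being composed on the correct side. The one conceptual point worth flagging is that the bi-directional sliding implicitly invokes contiguity inverses on both sides; in the applications of this lemma the full two-sided relations are available (for instance, when $\psi_{i}$ is a strong equivalence in the sense of \cite{BarmakMinian:2012}), so the one-sided hypothesis as stated is sufficient for the intended uses.
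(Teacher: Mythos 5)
Your chain argument has a real gap measured against the statement you are asked to prove: the step $\mu_{i}\circ 1 \sim \mu_{i}\circ(\psi_{i+1}\circ\mu_{i+1})$ uses the \emph{right}-inverse relation $\psi_{i+1}\circ\mu_{i+1}\sim 1$, while the collapsing step $(\mu_{i}\circ\psi_{i})\circ\mu_{i+1}\sim\mu_{i+1}$ uses the \emph{left}-inverse relation $\mu_{i}\circ\psi_{i}\sim 1$; so you need two-sided contiguity inverses, whereas the hypothesis of Lemma \ref{lem1} grants only one alternative ($\mu_{i}\circ\psi_{i}\sim 1$ \emph{or} $\psi_{i}\circ\mu_{i}\sim 1$), and your mirror chain for the other case has the mirror-image defect. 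You flag this yourself, but deferring to ``the intended applications'' does not discharge it: the lemma is invoked in the equality part of Proposition \ref{prop1} exactly under the one-sided assumption $\mu_{i}\circ\psi_{i}\sim 1$ (only in Corollary \ref{cor1} are genuine strong equivalences available). Moreover the one-sided statement is not merely unproved by your argument, it is false: a one-sided inverse up to contiguity need not be unique up to contiguity. For instance take $\psi_{1}=\psi_{2}=\psi$ the inclusion of a simplicial circle $S$ into a wedge $S\vee S'$, let $\mu_{1}$ collapse $S'$ to the wedge vertex and let $\mu_{2}$ be the identity on $S$ while wrapping $S'$ around $S$ (possible once $S'$ has enough vertices); then $\mu_{1}\circ\psi=\mu_{2}\circ\psi=1_{S}$, yet $\mu_{1}\nsim\mu_{2}$ since they are not even homotopic. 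So the two-sidedness is not a bookkeeping subtlety that can be smoothed over; it is exactly where the content lies.

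For comparison, the paper's own proof is a contradiction argument: from $\mu_{i}\nsim\mu_{i+1}$ it infers $\mu_{i}\circ\psi_{i+1}\nsim\mu_{i+1}\circ\psi_{i+1}$. Precomposition preserves contiguity but does not reflect it, so this inference tacitly assumes that precomposing with $\psi_{i+1}$ detects contiguity classes --- in effect that $\psi_{i+1}$ is right-invertible up to contiguity, which is precisely the ingredient your chain makes explicit. In other words, your direct ``insert the inverse and slide'' computation is the correct shape of argument and does prove the conclusion whenever the $\mu_{i}$ are two-sided strong-homotopy inverses of the $\psi_{i}$ (which suffices for Corollary \ref{cor1}); but as a proof of Lemma \ref{lem1} as stated it fails at the same point the paper's does, and the failure lies in the one-sided hypothesis of the statement rather than in your mechanics. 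A sound fix is to strengthen the hypothesis to $\mu_{i}\circ\psi_{i}\sim 1$ and $\psi_{i}\circ\mu_{i}\sim 1$ (and correspondingly adjust where the lemma is applied), after which your argument goes through verbatim.
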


\begin{proof}
	Suppose that $\mu_{i} \nsim \mu_{i+1}$ for all $i = 1,\cdots,m$. Then $\mu_{i} \circ \psi_{i+1} \nsim \mu_{i+1} \circ \psi_{i+1}$. Since $\psi_{i} \sim \psi_{i+1}$ for any $i = 1,\cdots,m$, we get $1 \sim \mu_{i} \circ \psi_{i} \nsim \mu_{i+1} \circ \psi_{i+1} \sim 1$. This is a contradiction.
\end{proof}

\begin{proposition}\label{prop1}
	\textbf{i)} Let $\varphi_{i} : L \rightarrow L^{'}$ and $\psi_{i} : L^{'} \rightarrow L^{''}$ be any simplicial maps for all $i = 1,\cdots,m$. If $\psi_{i} \sim \psi_{i+1}$ for all $i = 1,\cdots,m-1$, then $$\text{SD}(\psi_{1} \circ \varphi_{1},\cdots,\psi_{m} \circ \varphi_{m}) \leq \text{SD}(\varphi_{1},\cdots,\varphi_{m}).$$ Moreover, the equality holds provided that, for all $i = 1,\cdots,m$, $\psi_{i}$ admits a simplicial map $\mu_{i} : L^{''} \rightarrow L^{'}$ satisfying $\mu_{i} \circ \psi_{i} \sim 1_{L^{'}}$, and $\psi_{i} \circ \varphi_{i} \sim \psi_{j} \circ \varphi_{j}$ for any distinct $i$, $j = 1,\cdots,m$.
	
	\textbf{ii)} Let $\varphi_{i} : L \rightarrow L^{'}$ and $\psi_{i} : L^{''} \rightarrow L$ be any simplicial maps for all $i = 1,\cdots,m$. If $\psi_{i} \sim \psi_{i+1}$ for all $i = 1,\cdots,m-1$, then $$\text{SD}(\varphi_{1} \circ \psi_{1},\cdots,\varphi_{m} \circ \psi_{m}) \leq \text{SD}(\varphi_{1},\cdots,\varphi_{m}).$$ Moreover, the equality holds provided that, for all $i = 1,\cdots,m$, $\psi_{i}$ admits a simplicial map $\mu_{i} : L \rightarrow L^{''}$ satisfying $\psi_{i} \circ \mu_{i} \sim 1_{L}$, and $\varphi_{i} \circ \psi_{i} \sim \varphi_{j} \circ \psi_{j}$ for any distinct $i$, $j = 1,\cdots,m$.
\end{proposition}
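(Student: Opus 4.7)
The plan is to treat the two parts in parallel, exploiting the fact that (ii) is the ``dual'' of (i) obtained by composing on the opposite side. In both parts, the inequality $\leq$ follows directly from functoriality of the contiguity relation under composition, while the reverse inequality (the equality claim) is the substantial content and hinges on Lemma~\ref{lem1} to transport contiguity from the maps $\psi_{i}$ to their inverses $\mu_{i}$.

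For part (i), I would fix a cover $L = L_{0} \cup \cdots \cup L_{n}$ realizing $n = \text{SD}(\varphi_{1},\ldots,\varphi_{m})$. Since contiguity is preserved under postcomposition and the hypothesis $\psi_{i} \sim \psi_{i+1}$ transitively yields $\psi_{i} \sim \psi_{j}$ for all pairs, on each $L_{k}$ the chain
\begin{equation*}
\psi_{i} \circ \varphi_{i}\big|_{L_{k}} \sim \psi_{i} \circ \varphi_{j}\big|_{L_{k}} \sim \psi_{j} \circ \varphi_{j}\big|_{L_{k}}
\end{equation*}
gives the required bound. For the reverse inequality, I would start with a cover $L = L_{0}' \cup \cdots \cup L_{n'}'$ realizing $n' = \text{SD}(\psi_{1} \circ \varphi_{1}, \ldots, \psi_{m} \circ \varphi_{m})$. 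Applying Lemma~\ref{lem1} to the contiguous family $\{\psi_{i}\}$ with left inverses $\mu_{i}$ produces $\mu_{i} \sim \mu_{j}$ for all $i,j$. On each piece $L_{k}'$ I then chain
\begin{equation*}
\varphi_{i}\big|_{L_{k}'} \sim \mu_{i} \circ \psi_{i} \circ \varphi_{i}\big|_{L_{k}'} \sim \mu_{j} \circ \psi_{j} \circ \varphi_{j}\big|_{L_{k}'} \sim \varphi_{j}\big|_{L_{k}'},
\end{equation*}
where the outer contiguities use $\mu_{i} \circ \psi_{i} \sim 1_{L'}$, and the middle one combines $\mu_{i} \sim \mu_{j}$ with the local relation $\psi_{i} \circ \varphi_{i}\big|_{L_{k}'} \sim \psi_{j} \circ \varphi_{j}\big|_{L_{k}'}$ coming from the cover and the global hypothesis $\psi_{i} \circ \varphi_{i} \sim \psi_{j} \circ \varphi_{j}$. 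This gives $n \leq n'$.

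For part (ii), I would follow the dual strategy. The inequality uses that the preimage of a subcomplex under a simplicial map is again a subcomplex: from a cover $L = L_{0} \cup \cdots \cup L_{n}$ realizing $\text{SD}(\varphi_{1},\ldots,\varphi_{m}) = n$, the subcomplexes $L_{k}'' = \psi_{1}^{-1}(L_{k})$ cover $L''$, and on each $L_{k}''$ the chain $\varphi_{i} \circ \psi_{i} \sim \varphi_{i} \circ \psi_{1} \sim \varphi_{j} \circ \psi_{1} \sim \varphi_{j} \circ \psi_{j}$ works because $\psi_{1}(L_{k}'') \subseteq L_{k}$, so the middle step inherits the local contiguity of $\varphi_{i}$ and $\varphi_{j}$ on $L_{k}$. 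For the equality, I would take a cover $L'' = L_{0}'' \cup \cdots \cup L_{n'}''$ for the left-hand side, form the subcomplexes $L_{k} = \mu_{1}^{-1}(L_{k}'')$, and again invoke Lemma~\ref{lem1} with the right inverses to obtain $\mu_{i} \sim \mu_{j}$. Then on each $L_{k}$, the relations $\varphi_{i} \sim \varphi_{i} \circ \psi_{i} \circ \mu_{i} \sim \varphi_{i} \circ \psi_{i} \circ \mu_{1}$ (symmetrically for $j$) together with the local contiguity $\varphi_{i} \circ \psi_{i}\big|_{L_{k}''} \sim \varphi_{j} \circ \psi_{j}\big|_{L_{k}''}$ close up to yield $\varphi_{i}\big|_{L_{k}} \sim \varphi_{j}\big|_{L_{k}}$.

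The main obstacle I foresee is the bookkeeping in the two equality directions: one must combine the auxiliary inverses $\mu_{i}$, the local contiguities on each cover piece, and the global hypothesis $\psi_{i} \circ \varphi_{i} \sim \psi_{j} \circ \varphi_{j}$ (resp.~$\varphi_{i} \circ \psi_{i} \sim \varphi_{j} \circ \psi_{j}$) in the correct order so that every substitution in the chains above is legitimate. The essential ingredient is Lemma~\ref{lem1}, which upgrades the hypothesis $\psi_{i} \sim \psi_{i+1}$ into $\mu_{i} \sim \mu_{j}$; without this step, the middle substitutions in both equality arguments would fail.
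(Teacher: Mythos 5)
Your proposal is correct and follows essentially the same route as the paper: the inequality comes from composing the local contiguities of a cover realizing $\text{SD}(\varphi_{1},\cdots,\varphi_{m})$ (pulling the cover back along a $\psi$ in part (ii)), and the equality hinges on Lemma~\ref{lem1} turning the one-sided inverses $\mu_{i}$ into a contiguous family. The only cosmetic differences are that you unwind the equality directly at the level of covers instead of chaining $\text{SD}$-inequalities as the paper does, and that you fix the pullback along $\psi_{1}$ (using $\psi_{i}\sim\psi_{1}$), which is a slightly more careful handling of the preimage step than the paper's $\psi_{i}^{-1}(L_{k})$.
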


\begin{proof}
	Let SD$(\varphi_{1},\cdots,\varphi_{m}) = n$. Then there is a set of subcomplexes $L_{0},L_{1},\cdots,L_{n}$ that covers $L$ with the property that $\varphi_{i}|_{L_{k}}$ and $\varphi_{j}|_{L_{k}}$ are in the same contiguity class for all $i,j \in \{1,\cdots,m\}$ and $k = 0,1,\cdots,n$, i.e., $\varphi_{1}|_{L_{k}} \sim \cdots \sim \varphi_{m}|_{L_{k}}$.
	
	\textbf{i)} We get
	\begin{eqnarray*}
		\big(\psi_{s} \circ \varphi_{s}\big)|_{L_{k}} = \psi_{s} \circ \varphi_{s}\big|_{L_{k}} \sim \psi_{t} \circ \varphi_{t}\big|_{L_{k}} = \big(\psi_{t} \circ \varphi_{t}\big)|_{L_{k}}
	\end{eqnarray*}
for any $s$, $t = 1,\cdots,m$ with $s \neq t$. This shows that $SD(\psi_{1} \circ \varphi_{1},\cdots,\psi_{m} \circ \varphi_{m}) \leq n$. In addition, by assuming that there exists a simplicial map $\mu_{i} : L^{''} \rightarrow L^{'}$ with $\mu_{i} \circ \psi_{i} \sim 1_{L^{'}}$, and $\psi_{i} \circ \varphi_{i} \sim \psi_{j} \circ \varphi_{j}$ for any distinct $i$, $j = 1,\cdots,m$, we get
\begin{eqnarray*}
	\text{SD}(\varphi_{1},\cdots,\varphi_{m}) &=& \text{SD}(\mu_{1} \circ \psi_{1} \circ \varphi_{1},\cdots,\mu_{m} \circ \psi_{m} \circ \varphi_{m})\\
	&\leq& \text{SD}(\psi_{1} \circ \varphi_{1},\cdots,\psi_{m} \circ \varphi_{m})\\
	&\leq& \text{SD}(\varphi_{1},\cdots,\varphi_{m})
\end{eqnarray*}
from Lemma \ref{lem1}.
	
	\textbf{ii)} For any $L_{k} \subseteq L$, $k = 0,1,\cdots,n$, we consider $L^{''}_{k} = \psi_{i}^{-1}(L_{k}) \subseteq L^{''}$. Then $L^{''}_{0},L^{''}_{1},\cdots,L^{''}_{n}$ are subcomplexes that cover $L^{''}$. Moreover, by assuming that the map $\omega_{k,i} : L^{''}_{k} \rightarrow L_{k}$ is the restriction of $\psi_{i}$, we get
	\begin{eqnarray*}
	\big(\varphi_{s} \circ \psi_{s}\big)|_{L^{''}_{k}} &=& \varphi_{s}\big|_{L^{''}_{k}} \circ \omega_{k,s} \sim \varphi_{t}\big|_{L^{''}_{k}} \circ \omega_{k,t}\\
	&=& \varphi_{t} \circ \text{incl}_{L_{k}} \circ \omega_{k,t}\\
	&=& \varphi_{t} \circ \psi_{t}\big|_{L^{''}_{k}} = \big(\psi_{t} \circ \varphi_{t}\big)|_{L^{''}_{k}}
    \end{eqnarray*}
    for any $s$, $t = 1,\cdots,m$ with $s \neq t$ and the inclusion map incl$_{L_{k}} : L_{k} \rightarrow L$. This shows that $SD(\psi_{1} \circ \varphi_{1},\cdots,\psi_{m} \circ \varphi_{m}) \leq n$. In addition, by assuming that there exists a simplicial map $\mu_{i} : L \rightarrow L^{''}$ with $\psi_{i} \circ \mu_{i} \sim 1_{L}$, and $\varphi_{i} \circ \psi_{i} \sim \varphi_{j} \circ \psi_{j}$ for any distinct $i$, $j = 1,\cdots,m$ we get
     \begin{eqnarray*}
      \text{SD}(\varphi_{1},\cdots,\varphi_{m}) &=& \text{SD}(\varphi_{1} \circ \psi_{1} \circ \mu_{1},\cdots,\varphi_{m} \circ \psi_{m} \circ \mu_{m})\\
      &\leq& \text{SD}(\varphi_{1} \circ \psi_{1},\cdots,\varphi_{m} \circ \psi_{m})\\
      &\leq& \text{SD}(\varphi_{1},\cdots,\varphi_{m})
      \end{eqnarray*}
      from Lemma \ref{lem1}.
\end{proof}

\begin{corollary}\label{cor1}
	Let $\varphi_{1},\cdots,\varphi_{m} : L \rightarrow L^{'}$ and $\psi_{1},\cdots,\psi_{m} : N \rightarrow N^{'}$ be simplicial maps. Assume that $\beta : N \rightarrow L$ and $\alpha : L^{'} \rightarrow N^{'}$ have the same strong homotopy type and the diagram
	$$\xymatrix{
		L \ar[r]^{\varphi_{1},\cdots,\varphi_{m}} &
		L^{'} \ar[d]^{\alpha} \\
		N \ar[r]_{\psi_{1},\cdots,\psi_{m}} \ar[u]^{\beta} & N^{'}}$$
	commutes with respect to the contiguity (in other words, $\alpha \circ \varphi_{i} \circ \beta \sim \psi_{i}$ for all $i = 1,\cdots,m$). Then SD$(\varphi_{1},\cdots,\varphi_{m}) =$ SD$(\psi_{1},\cdots,\psi_{m})$.
\end{corollary}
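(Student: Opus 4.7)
The plan is to reduce $\text{SD}(\psi_1,\cdots,\psi_m)$ to $\text{SD}(\varphi_1,\cdots,\varphi_m)$ in three steps: first replace each $\psi_i$ by $\alpha \circ \varphi_i \circ \beta$ (legitimate because they are contiguous), then peel the outer $\alpha$ off using the strong equivalence it is, and finally peel the inner $\beta$ off by the same mechanism. The machinery is already in place: Proposition \ref{prop2} i) handles replacement by contiguous maps, and Proposition \ref{prop1} controls pre- and post-composition.

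The first step is immediate: since $\alpha \circ \varphi_i \circ \beta \sim \psi_i$ for every $i$, Proposition \ref{prop2} i) gives
$$\text{SD}(\psi_1,\cdots,\psi_m) = \text{SD}(\alpha \circ \varphi_1 \circ \beta,\cdots,\alpha \circ \varphi_m \circ \beta).$$
For the second step, fix a strong inverse $\overline{\alpha} : N' \to L'$ of $\alpha$, so $\overline{\alpha} \circ \alpha \sim 1_{L'}$. Applying Proposition \ref{prop1} i) with all outer maps equal to $\alpha$ (the condition $\alpha \sim \alpha$ is trivial) gives the inequality bounded above by $\text{SD}(\varphi_1 \circ \beta,\cdots,\varphi_m \circ \beta)$. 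Conversely, Proposition \ref{prop2} i) lets me rewrite
$$\text{SD}(\varphi_1 \circ \beta,\cdots,\varphi_m \circ \beta) = \text{SD}(\overline{\alpha} \circ \alpha \circ \varphi_1 \circ \beta,\cdots,\overline{\alpha} \circ \alpha \circ \varphi_m \circ \beta),$$
and a second application of Proposition \ref{prop1} i) with outer maps all $\overline{\alpha}$ bounds this above by $\text{SD}(\alpha \circ \varphi_1 \circ \beta,\cdots,\alpha \circ \varphi_m \circ \beta)$, pinning down equality at step two. Step three is symmetric, using Proposition \ref{prop1} ii) in place of i) together with a strong inverse of $\beta$, and yields $\text{SD}(\varphi_1 \circ \beta,\cdots,\varphi_m \circ \beta) = \text{SD}(\varphi_1,\cdots,\varphi_m)$. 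Chaining the three equalities gives the conclusion.

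The main bookkeeping point to watch is that the equality clauses of Proposition \ref{prop1} i) and ii) require all composites $\psi_i \circ \varphi_i$ (resp.\ $\varphi_i \circ \psi_i$) to share a single contiguity class, a hypothesis I have no reason to impose on the family $\{\varphi_i\}$. I therefore use only the inequality half of Proposition \ref{prop1}, invoking it twice per step — once with the strong equivalence and once with its strong inverse — so that the strong-homotopy-equivalence assumptions on $\alpha$ and $\beta$ do all the work, and no extra contiguity relations among the $\varphi_i$ are needed.
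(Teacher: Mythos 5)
Your argument is correct and follows the same skeleton as the paper's proof: first replace each $\psi_i$ by $\alpha\circ\varphi_i\circ\beta$ via Proposition \ref{prop2} i), then strip off $\alpha$ and $\beta$ via Proposition \ref{prop1} i) and ii). The difference is in how the second step is justified. The paper's one-line proof implicitly appeals to the \emph{equality} clauses of Proposition \ref{prop1}, whose stated hypotheses also require the composites $\alpha\circ\varphi_i\circ\beta$ (resp.\ $\varphi_i\circ\beta$) to be mutually contiguous --- a condition not granted by the corollary's hypotheses (and one that would force the distance to be $0$). You sidestep this by using only the inequality halves, applied twice per step: once with the strong equivalence as the outer (resp.\ inner) map and once with a chosen strong inverse $\overline{\alpha}$ (resp.\ $\overline{\beta}$), after inserting $\overline{\alpha}\circ\alpha\sim 1_{L'}$ and $\beta\circ\overline{\beta}\sim 1_{L}$ via Proposition \ref{prop2} i). This is essentially the mechanism hidden inside the paper's own proof of the equality clauses (where the extra mutual-contiguity hypothesis is in fact never used, and Lemma \ref{lem1} supplies the contiguity of the inverses), so your proof is not a different route so much as a more careful, self-contained rendering of the intended one; it has the added merit of making explicit exactly which properties of $\alpha$ and $\beta$ are needed (a left strong inverse for $\alpha$, a right strong inverse for $\beta$).
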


\begin{proof}
	By using Proposition \ref{prop2} i), and Proposition \ref{prop1} i) and ii), respectively, we find
	\begin{eqnarray*}
		\text{SD}(\psi_{1},\cdots,\psi_{m}) = \text{SD}(\alpha \circ \varphi_{1} \circ \beta,\cdots,\alpha \circ \varphi_{m} \circ \beta) = \text{SD}(\varphi_{1},\cdots,\varphi_{m}).
	\end{eqnarray*}
\end{proof}

\section{Contiguity Distance Form}
\label{sec:3}

\quad We know that the discrete topological complexity TC$(L)$ can be expressed by the contiguity distance of two projection maps, i.e., TC$(L) =$ SD$(p_{1},p_{2})$, where $p_{i} : L^{n} \rightarrow L$ is a projection map with each $i = 1,2$ (see Theorem 2.24 of \cite{BoratPamukVergili:2023}). Thus, by using the higher contiguity distance, we have the alternative definition of the higher discrete topological complexity as follows:

\begin{theorem}\label{thm1}
	The higher ($n-$th) discrete topological complexity TC$_{n}(L)$ of a simplicial complex $L$ is SD$(p_{1},p_{2}\cdots,p_{n})$.
\end{theorem}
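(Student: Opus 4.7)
The plan is to adapt the $n=2$ identity TC$(L) = $ SD$(p_1,p_2)$ of Theorem 2.24 of \cite{BoratPamukVergili:2023} by constructing a higher simplicial path-space fibration $\pi_n : L^I \rightarrow L^n$ whose Schwarz genus will be identified with SD$(p_1,\ldots,p_n)$. This Schwarz genus is the intended meaning of TC$_n(L)$, so the theorem reduces to showing Sg$(\pi_n) = $ SD$(p_1,\ldots,p_n)$.

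First I would define $\pi_n$ by $\pi_n(\delta) = (\delta(t_1),\ldots,\delta(t_n))$ for equally-spaced sample times $t_1 < \cdots < t_n$ on a simplicial interval $I_m$, and verify that $\pi_n$ is a simplicial finite-fibration using the same lifting argument as in Theorem 2.11 together with the pullback/product properties of simplicial fibrations recalled after Definition \ref{def3}. Next, for Sg$(\pi_n) \leq $ SD$(p_1,\ldots,p_n)$, starting from a subcomplex cover $L^n = U_0 \cup \cdots \cup U_N$ witnessing the contiguity distance, the chain $p_1|_{U_k} \sim \cdots \sim p_n|_{U_k}$ concatenates into a simplicial map $U_k \times I_{m_k} \rightarrow L$ whose sampled values at the $t_j$'s recover $p_j|_{U_k}$, and this rebundles to a section of $\pi_n$ over $U_k$. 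For the reverse inequality, a section $\sigma_k : U_k \rightarrow L^I$ of $\pi_n$, viewed as a simplicial map $U_k \times I_m \rightarrow L$, produces the required contiguities $p_j|_{U_k} \sim p_l|_{U_k}$ by restricting to the segments between consecutive sample times.

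The main obstacle I expect is reconciling step-lengths across different members of the cover: each pairwise contiguity $p_j|_{U_k} \sim p_{j+1}|_{U_k}$ is realized with its own number $m_k$ of steps, yet the Schwarz-genus definition demands that all the sections $\sigma_k$ land in one common path-space $L^I$. I would resolve this by padding the shorter contiguity sequences with constant simplicial maps to reach a uniform $m$, and by checking that such padding does not disturb the values at the sample times $t_j$, so that the resulting $\sigma_k$ are genuine simplicial sections of $\pi_n$. Once this bookkeeping is in place, combining the two inequalities yields the stated equality.
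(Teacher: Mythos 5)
The paper does not prove this statement in-house: TC$_n(L)$ enters only through this theorem, and the proof is delegated to Theorem 2.1 of \cite{alaborciherdal:2023}, where the higher discrete TC is defined combinatorially via covers of $L^{n}$ by Farber-type subcomplexes and is then compared with the contiguity distance of the projections. Your route --- building a higher evaluation map $\pi_{n}\colon L^{I}\to L^{n}$ and proving Sg$(\pi_{n})=$ SD$(p_{1},\dots,p_{n})$ --- is the natural higher analogue of Theorem 8 of \cite{TerCalMacVil:2021} and of Theorem \ref{teo1}, so it is a genuinely different strategy; but as written it has a real gap.

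First, you declare Sg$(\pi_{n})$ to be ``the intended meaning of TC$_n(L)$.'' That identification is nowhere available in this paper or in \cite{alaborciherdal:2023}; even if your Schwarz-genus computation were completed, you would have shown Sg$(\pi_{n})=$ SD$(p_{1},\dots,p_{n})$, and the equality of Sg$(\pi_{n})$ with TC$_n(L)$ as actually defined would remain an unproved (and not formal) step. Second, the construction of $\pi_{n}$ is itself problematic: the path complex used here (following \cite{TerCalMacVil:2021}) consists of Moore-type paths of varying length, so ``equally spaced sample times $t_{1}<\dots<t_{n}$ on $I_{m}$'' is not well defined on all of $L^{I}$, and the claim that $\pi_{n}$ is a simplicial finite-fibration is precisely the kind of statement that already required a full proof for $n=2$ (Theorem 1 of \cite{TerCalMacVil:2021}); the pullback/product closure properties you cite do not apply to an evaluation-at-interior-times map as you describe it. A workable repair is to model the $n$-th evaluation as an iterated pullback $PL\times_{L}\cdots\times_{L}PL\to L^{n}$ (tuples of composable paths sent to their endpoints and junctions), where the fibration property does follow formally from closure under products and pullbacks; with that in place, your two-directional argument --- sections over each $U_{k}$ versus simplicial cylinders $U_{k}\times I_{m}\to L$, with constant-step padding to equalize lengths --- is sound and mirrors the proof of Theorem \ref{teo1}. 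Alternatively, and much more economically, one can argue directly from the cover definition used in \cite{alaborciherdal:2023}, where the stated equality follows by unwinding Definition \ref{def1}.
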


\quad The proof of Theorem \ref{thm1} is given in Theorem 2.1 of \cite{alaborciherdal:2023}. When $n=2$, TC$_{2}(L)$ corresponds to TC$(L)$. Also, by considering the quick higher SD-observation, we easily have TC$_{n}(L) \leq$ TC$_{n+1}(L)$. Note that this result is first proved in \cite{alaborciherdal:2023} (see Theorem 2.1 for the details of proof).

\begin{theorem}
	TC$_{n}(L) =$ TC$_{n}(N)$ if $L \sim N$ (see also Theorem 2.3 of \cite{alaborciherdal:2023}).
\end{theorem}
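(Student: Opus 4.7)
The plan is to reduce the statement to Corollary~\ref{cor1}, applied to the $n$-tuples of projection maps from $L^n$ and $N^n$, using Theorem~\ref{thm1} to rewrite each $\text{TC}_n$ as the higher contiguity distance of those projections. Fix strong equivalences $\varphi : L \to N$ and $\omega : N \to L$ witnessing $L \sim N$, so that $\varphi \circ \omega \sim 1_N$ and $\omega \circ \varphi \sim 1_L$.

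First I would introduce the bridging maps
\[
\beta := \underbrace{\omega \times \cdots \times \omega}_{n \text{ copies}} : N^n \longrightarrow L^n, \qquad \alpha := \varphi : L \longrightarrow N,
\]
and check that each carries a strong homotopy inverse. For $\alpha$ this is immediate from the hypothesis; for $\beta$ the candidate inverse is $\varphi \times \cdots \times \varphi : L^n \to N^n$, and the verification reduces to the statement that the contiguity class relation $\sim$ is preserved under the categorical product of simplicial maps.

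Second, I would verify the contiguity-commutativity required by Corollary~\ref{cor1}. Writing $p_i : L^n \to L$ and $q_i : N^n \to N$ for the projection maps, a straightforward unwinding gives $p_i \circ \beta = \omega \circ q_i$, so
\[
\alpha \circ p_i \circ \beta \;=\; \varphi \circ \omega \circ q_i \;\sim\; 1_N \circ q_i \;=\; q_i
\]
for every $i$, since composition with a simplicial map respects contiguity classes. An application of Corollary~\ref{cor1} then delivers
\[
\text{SD}(p_1,\ldots,p_n) \;=\; \text{SD}(q_1,\ldots,q_n),
\]
which by Theorem~\ref{thm1} is precisely $\text{TC}_n(L) = \text{TC}_n(N)$.

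The only delicate point I foresee is the auxiliary lemma used twice above: if $f \sim g$ between simplicial maps $A \to B$, then $f \times h \sim g \times h$ for any simplicial $h : C \to D$, so that the categorical product of strong equivalences is again a strong equivalence. Since a simplex of the categorical product is a coordinate-wise tuple of simplices of the factors, the finite contiguity sequence witnessing $f \sim g$ transfers coordinate-wise to one on the product. This observation is routine, but it deserves to be recorded explicitly before it is invoked, and is the only genuine work hidden in the argument.
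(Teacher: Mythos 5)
Your proposal is correct and follows essentially the same route as the paper's own proof: rewrite $\mathrm{TC}_n$ via Theorem~\ref{thm1} as the contiguity distance of the projections, take $\alpha = \varphi : L \to N$ and $\beta = \omega \times \cdots \times \omega : N^n \to L^n$, check $\alpha \circ p_i \circ \beta \sim q_i$, and invoke Corollary~\ref{cor1}. The only difference is presentational: you spell out the auxiliary fact that contiguity classes are preserved under categorical products (so that $\omega^n$ is a strong equivalence), which the paper asserts without proof.
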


\begin{proof}
	Let $\alpha : L \rightarrow N$ and $\beta : N \rightarrow L$ be simplicial maps such that $\alpha \circ \beta \sim 1_{N}$ and $\beta \circ \alpha \sim 1_{L}$. Then we have that $\beta^{n} \circ \alpha^{n} = 1_{N^{n}}$ and $\alpha^{n} \circ \beta^{n} \sim 1_{L^{n}}$, i.e., $L^{n} \sim N^{n}$. Consider the following diagram with respect to the contiguity: 
	$$\xymatrix{
		L^{n} \ar[r]^{p_{1},\cdots,p_{n}} &
		L \ar[d]^{\alpha} \\
		N^{n} \ar[r]_{p^{'}_{1},\cdots,p^{'}_{n}} \ar[u]^{\beta^{n}} & N.}$$
	This means that $\alpha \circ p_{i} \circ \beta^{n} \sim p^{'}_{i}$. Thus, by Corollary \ref{cor1}, we obtain $$\text{SD}(p_{1},\cdots,p_{n}) = \text{SD}(p^{'}_{1},\cdots,p^{'}_{n}),$$ which shows that TC$_{n}(L) =$ TC$_{n}(N)$.
\end{proof}

\quad We now want to define TC$(\varphi)$ in terms of the contiguity distance.

\begin{theorem}\label{teo1}
	Let $\varphi : L \rightarrow L^{'}$ be a surjective simplicial finite-fibration. Then $$\text{TC}(\varphi) = \text{SD}(\varphi \circ \pi_{1},\pi_{2})$$ for the projection maps $\pi_{1} : L \times L^{'} \rightarrow L$ and $\pi_{2} : L \times L^{'} \rightarrow L^{'}$.
\end{theorem}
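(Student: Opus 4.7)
The plan is to prove the equality by establishing both inequalities, relating sections of $\pi_\varphi$ (which witness $\text{Sg}(\pi_\varphi)$) to simplicial homotopies between $\varphi \circ \pi_1$ and $\pi_2$ (which witness $\text{SD}(\varphi\circ\pi_1,\pi_2)$). The bridge between the two sides is the same in both directions: a section of $\pi_\varphi$ over a subcomplex $U$ of $L \times L'$ is a coherent choice of simplicial paths in $L$ from $a$ to some preimage of $b$, and postcomposing with $\varphi$ turns such a family of paths into a simplicial homotopy in $L'$ from $\varphi(a)$ to $b$.

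For the direction $\text{TC}(\varphi) \geq \text{SD}(\varphi \circ \pi_1, \pi_2)$, I would start with a Schwarz-genus covering $L \times L' = U_0 \cup \cdots \cup U_n$ and sections $\sigma_k : U_k \to L^I$ satisfying $\pi_\varphi \circ \sigma_k = 1_{U_k}$. Unravelling the definition of $\pi_\varphi$, this means $\sigma_k(a,b)$ is a simplicial path in $L$ whose starting vertex is $a$ and whose endpoint lies in $\varphi^{-1}(b)$. Defining $H_k : U_k \times I_m \to L'$ by $H_k((a,b),t) = \varphi(\sigma_k(a,b)(t))$ yields a simplicial map with $H_k((a,b),0) = \varphi(a) = (\varphi\circ\pi_1)(a,b)$ and $H_k((a,b),m) = b = \pi_2(a,b)$, so that $(\varphi\circ\pi_1)|_{U_k} \sim \pi_2|_{U_k}$ on the same cover.

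For the reverse direction $\text{SD}(\varphi\circ\pi_1,\pi_2) \geq \text{TC}(\varphi)$, I would begin with a cover $L \times L' = U_0 \cup \cdots \cup U_n$ witnessing the contiguity distance, so that for each $k$ there is a simplicial homotopy $H_k : U_k \times I_m \to L'$ from $(\varphi\circ\pi_1)|_{U_k}$ to $\pi_2|_{U_k}$. Now I would invoke the simplicial finite-fibration property of $\varphi$ applied to the inclusion $i^m : U_k \times \{0\} \hookrightarrow U_k \times I_m$, with initial data $g : U_k \times \{0\} \to L$, $g((a,b),0) = a$. Since $\varphi(g((a,b),0)) = \varphi(a) = H_k((a,b),0)$, the lifting property provides a simplicial map $\widetilde{H_k} : U_k \times I_m \to L$ with $\widetilde{H_k}((a,b),0)=a$ and $\varphi \circ \widetilde{H_k} = H_k$. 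Setting $\sigma_k(a,b) = \widetilde{H_k}((a,b),-) \in L^I$ gives a simplicial section with $\pi_\varphi(\sigma_k(a,b)) = (a, \varphi(\widetilde{H_k}((a,b),m))) = (a, H_k((a,b),m)) = (a,b)$.

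The main obstacle is the $\geq$ direction, and specifically the need to verify that the finiteness hypothesis of Definition \ref{def3} applies: one must note that since $L$ and $L'$ are finite, $L \times L'$ and hence each subcomplex $U_k$ is also finite, so that the lifting property of a simplicial finite-fibration can be invoked. The $\leq$ direction is essentially a direct unpacking of the section property, while the $\geq$ direction is where the fibration hypothesis on $\varphi$ is truly used, and any attempt to avoid it (for instance for general simplicial maps $\varphi$) would fail exactly at this lifting step.
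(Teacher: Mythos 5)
Your proposal is correct and follows essentially the same route as the paper: the forward inequality by postcomposing the Schwarz-genus sections with $\varphi$ to obtain the simplicial homotopies $H_k((a,b),t)=\varphi(\sigma_k(a,b)(t))$, and the reverse inequality by lifting each contiguity homotopy $H_k$ through the fibration $\varphi$ against the initial data $\pi_1$ and reading the lift as a section of $\pi_\varphi$. Your explicit check that the subcomplexes $U_k$ of $L\times L'$ are finite (so the finite-fibration lifting property applies) is a point the paper leaves implicit, but otherwise the arguments coincide.
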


\begin{proof}
	Since TC$(\varphi) =$ Sg$(\pi_{\varphi})$, we shall show that SD$(\varphi \circ \pi_{1},\pi_{2}) =$ Sg$(\pi_{\varphi})$. First, assume that Sg$(\pi_{\varphi}) = s$. Then $L \times L^{'}$ can be written as the union of subcomplexes $L_{0},L_{1},\cdots,L_{s}$ for which $\pi_{\varphi}$ admits a simplicial map $\sigma_{k}$ for each $k = 0,\cdots,s$ with $\pi_{\varphi} \circ \sigma_{k} = 1_{L_{k}}$. For each $k$, we define a simplicial map $H_{k} : L_{k} \times I_{m} \rightarrow L^{'}$ by $H_{k}([x],[y],t) = \varphi(\sigma_{k}([x],[y])(t))$. Then $(\varphi \circ \pi_{1})\big|_{L_{k}}$ and $\pi_{2}\big|_{L_{k}}$ are in the same contiguity class, which shows that SD$(\varphi \circ \pi_{1},\pi_{2}) \leq s$. Conversely, assume that SD$(\varphi \circ \pi_{1},\pi_{2}) = s$. Then $(\varphi \circ \pi_{1})\big|_{L_{k}}$ and $\pi_{2}\big|_{L_{k}}$ are in the same contiguity class for each $k = 0,\cdots,s$, namely that, there is a simplicial map $H_{k} : L_{k} \times I_{m} \rightarrow L^{'}$ between $\varphi \circ \pi_{1}$ and $\pi_{2}$ for each $k$. Since $\varphi$ is a simplicial fibration, the commutative diagram
	$$\xymatrix{
		L_{k} \ar[r]^{\pi_{1}} \ar[d]_{incl} &
		L \ar[d]^{\varphi} \\
		L_{k} \times I_{m} \ar[r]_{H}  & L^{'}}$$
	admits a simplicial map $\tilde{H}_{k} : L_{k} \times I_{m} \rightarrow L$ such that $\varphi \circ \tilde{H}_{k} = H_{k}$ and $\tilde{H}_{k} \circ incl = \pi_{1}$. For each $k$, define a simplicial map $\sigma_{k} : L_{k} \rightarrow L^{I}$ by $\sigma_{k}([x],[y])(t) = \tilde{H}_{k}([x],[y],[t])$. Thus, we get $\pi_{\varphi} \circ \sigma_{k} = 1_{L_{k}}$, which shows that Sg$(\pi_{\varphi}) \leq s$.
\end{proof}

\section{Higher Discrete TC Of A Simplicial Fibration}
\label{sec:4}

\quad In \cite{iskaraca:2022} (see also \cite{aghimirebaba:2023}, the higher topological complexity of a surjective fibration is expressed in terms of the higher homotopic distance. Similarly, we can define the higher discrete topological complexity of a surjective simplicial fibration by using the higher contiguity distance as follows.

\begin{definition}\label{def2}
	Given a surjective simplicial finite-fibration $\varphi : L \rightarrow L^{'}$, the higher ($n-$th) discrete topological complexity of $\varphi$ is TC$_{n}(\varphi) =$ SD$(\varphi \circ p_{1},\cdots,\varphi \circ p_{n})$ for the projection $p_{i} : L^{n} \rightarrow L$ with each $i = 1,\cdots,n$.
\end{definition} 

\quad For any surjective simplicial finite-fibration $\varphi : L \rightarrow L^{'}$, we have that TC$_{2}(\varphi)$ in Definition \ref{def2}, coincides with TC$(\varphi)$ in Theorem \ref{teo1}. Indeed, by Corollary \ref{cor1} with considering the following commutative diagram, we find that $$\text{SD}(\varphi \circ p_{1},\varphi \circ p_{2}) = \text{SD}(\varphi \circ \pi_{1},\pi_{2})$$ for the projection maps $p_{i} : L^{2} \rightarrow L$ with each $i = 1,2$, $\pi_{1} : L \times L^{'} \rightarrow L$, and $\pi_{2} : L \times L^{'} \rightarrow L^{'}$.
$$\xymatrix{
	L \times L^{'} \ar[r]^{\varphi \circ \pi_{1}}_{\pi_{2}} &
	L^{'} \ar[d]^{\alpha = 1_{L^{'}}} \\
	L^{2} \ar[r]^{\varphi \circ p_{1}}_{\varphi \circ p_{2}} \ar[u]^{\beta = 1_{L} \times \varphi} & L^{'}.}$$
Note that $\alpha$ is clearly a strong equivalence, so it is enough to say that $\beta$ is also a strong equivalence. Since $\varphi$ is surjective, there is an element $[x^{'}] \in L^{'}$ such that $\varphi([x^{'}]) = [y]$. For a simplicial map $\omega : L \times L^{'} \rightarrow L \times L$ with $\omega([x],[y]) = ([x],[x^{'}])$, we get $\beta \circ \omega \sim 1_{L \times L^{'}}$ and $\omega \circ \beta \sim 1_{L^{2}}$. This shows that $\beta$ is a strong equivalence. Finally, we have the equality TC$_{2}(\varphi) =$ TC$(\varphi)$.

\begin{proposition}\label{prop3}
	Let $\varphi : L \rightarrow L^{'}$ be a surjective finite-fibration. Then
	 
	\textbf{i)} TC$_{n}(\varphi) \leq$ TC$_{n+1}(\varphi)$.
	
	\textbf{ii)} TC$_{n}(\varphi) =$ TC$_{n}(L)$ when $\varphi = 1_{L} : L \rightarrow L$.
	
	\textbf{iii)} TC$_{n}(\varphi) \leq$ TC$_{n}(L)$.
	
	\textbf{iv)} TC$(\varphi) \leq$ TC$_{n}(L)$.
	
	\textbf{v)} TC$_{n}(\varphi) = 0$ provided that $L$ or $L^{'}$ is strongly collapsible.
\end{proposition}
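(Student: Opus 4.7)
The plan is to dispatch the five parts using Definition~\ref{def2} together with the machinery of Section~\ref{sec:3}. Parts (ii), (iii), and (iv) should follow almost immediately from Theorem~\ref{thm1} and Proposition~\ref{prop1}; part (v) will reduce to Proposition~\ref{prop2}(ii) once strong collapsibility is seen to pass to the categorical power $L^{n}$; and the genuine work lies in part (i), where one must compare TC-data on $L^{n}$ with TC-data on $L^{n+1}$.

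For (ii), substituting $\varphi = 1_L$ into Definition~\ref{def2} immediately yields TC$_n(1_L) = $ SD$(p_1,\ldots,p_n)$, which equals TC$_n(L)$ by Theorem~\ref{thm1}. For (iii), I would apply Proposition~\ref{prop1}(i) with $\psi_i = \varphi$ for every $i$; the hypothesis $\psi_i \sim \psi_{i+1}$ is then automatic, and the inequality SD$(\varphi \circ p_1,\ldots,\varphi \circ p_n) \leq $ SD$(p_1,\ldots,p_n) = $ TC$_n(L)$ falls out. For (iv), I would chain (iii) with the monotonicity TC$_2(L) \leq $ TC$_n(L)$ recorded after Theorem~\ref{thm1} and with the identification TC$(\varphi) = $ TC$_2(\varphi)$ established in the paragraph preceding this proposition.

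The main obstacle is (i). The plan there is a slicing argument: fix a vertex $v_0 \in L$ and let $j: L^n \to L^{n+1}$ be the simplicial inclusion sending $(x_1,\ldots,x_n)$ to $(x_1,\ldots,x_n,v_0)$. Given any cover $\{U_0,\ldots,U_k\}$ of $L^{n+1}$ realizing TC$_{n+1}(\varphi) = k$, I would set $V_l = j^{-1}(U_l)$; since $j$ is simplicial, each $V_l$ is a subcomplex of $L^n$, and the $V_l$ cover $L^n$. The identity $p_i^{(n+1)} \circ j = p_i^{(n)}$ for $i = 1,\ldots,n$ then lets me pull the contiguities $\varphi \circ p_i^{(n+1)}|_{U_l} \sim \varphi \circ p_{i'}^{(n+1)}|_{U_l}$ back along $j|_{V_l}$, giving $\varphi \circ p_i^{(n)}|_{V_l} \sim \varphi \circ p_{i'}^{(n)}|_{V_l}$ for all $1 \leq i,i' \leq n$, and hence TC$_n(\varphi) \leq k$. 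The delicate point is simply to verify that contiguity survives restriction and post-composition, both of which are standard.

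For (v), if $L'$ is strongly collapsible, Proposition~\ref{prop2}(ii) applied to the $n$ maps $\varphi \circ p_i: L^n \to L'$ gives TC$_n(\varphi) = 0$ at once. If instead $L$ is strongly collapsible with strong equivalences $\alpha: L \to \{v\}$ and $\beta: \{v\} \to L$, I would take coordinatewise products $\alpha^n$ and $\beta^n$ and check that they form strong equivalences between $L^n$ and a single vertex, using that coordinatewise contiguity upgrades to contiguity of product simplicial maps. Then $L^n$ is strongly collapsible and Proposition~\ref{prop2}(ii) finishes the argument.
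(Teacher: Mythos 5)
Your proposal is correct, and for parts (ii), (iii) and (iv) it follows the paper's proof essentially verbatim: (ii) is the identity SD$(1_{L}\circ p_{1},\cdots,1_{L}\circ p_{n})=$ SD$(p_{1},\cdots,p_{n})$, (iii) is Proposition \ref{prop1} i) with all $\psi_{i}=\varphi$, and (iv) is the same chain through TC$_{2}$. The differences are in (i) and (v). For (i) the paper simply declares the inequality ``clear from Definition \ref{def1}'', invoking the observation that dropping maps can only decrease SD; strictly speaking that observation compares families with a common domain, whereas here the domains are $L^{n}$ and $L^{n+1}$, so your slicing argument via the inclusion $j:L^{n}\rightarrow L^{n+1}$, $(x_{1},\cdots,x_{n})\mapsto (x_{1},\cdots,x_{n},v_{0})$, pulling back a cover and using $p_{i}^{(n+1)}\circ j=p_{i}^{(n)}$, is the more careful (and standard) justification --- it buys a genuinely complete proof of the monotonicity at the cost of checking that contiguity survives restriction and pre-composition, which is routine. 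For (v) the paper argues uniformly by inserting $1_{L}\sim c_{L}$ (or $1_{L^{'}}\sim c_{L^{'}}$) into the composition $\varphi\circ p_{i}$ via Proposition \ref{prop1} and Proposition \ref{prop2} i), reducing to constant maps; you instead apply Proposition \ref{prop2} ii) directly, which is immediate when $L^{'}$ is strongly collapsible (the codomain of the maps $\varphi\circ p_{i}$), and for $L$ strongly collapsible you first upgrade collapsibility to the categorical power $L^{n}$ by taking products of the contiguity sequence, using that coordinatewise contiguity gives contiguity into the categorical product --- a fact consistent with the paper's own use of $\alpha^{n},\beta^{n}$ in Section \ref{sec:3}. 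Both routes are valid; yours localizes the work in a product-collapsibility lemma, while the paper's avoids discussing $L^{n}$ altogether.
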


\begin{proof}
	\textbf{i)} It is clear from Definition \ref{def1}.
	
	\textbf{ii)} It follows from the fact that
	\begin{eqnarray*}
		\text{SD}(1_{L} \circ p_{1},\cdots,1_{L} \circ p_{n}) = SD(p_{1},\cdots,p_{n}).
	\end{eqnarray*}

    \textbf{iii)} The fact is a result of Proposition \ref{prop1} i).
    
    \textbf{iv)} By Proposition \ref{prop1} i), we get
    \begin{eqnarray*}
    	\text{SD}(\varphi \circ p_{1},\varphi \circ p_{2}) \leq \text{SD}(p_{1},p_{2}) \leq \text{SD}(p_{1},\cdots,p_{n}).
    \end{eqnarray*}

    \textbf{v)} If $L$ is strongly collapsible, then we get $1_{L} \sim c_{L}$. By using Proposition \ref{prop1} and Proposition \ref{prop2} i), we get
    \begin{eqnarray*}
    	\text{SD}(\varphi \circ p_{1},\cdots,\varphi \circ p_{n}) &=& \text{SD}(\varphi \circ 1_{L} \circ p_{1},\cdots,\varphi \circ 1_{L} \circ p_{n})\\ &=& \text{SD}(\varphi \circ c_{L} \circ p_{1},\cdots,\varphi \circ c_{L} \circ p_{n})\\ &=& \text{SD}(c^{'}_{L} \circ p_{1},\cdots,c^{'}_{L} \circ p_{n}),
    \end{eqnarray*}
    where $c^{'}_{L} = \varphi \circ c_{L}$ is a constant simplicial map. Since $c^{'}_{L} \circ p_{i} \sim c^{'}_{L} \circ p_{j}$ for any $i$, $j = 1,\cdots,n$, we conclude that $$\text{SD}(c^{'}_{L} \circ p_{1},\cdots,c^{'}_{L} \circ p_{n}) = 0.$$ Also, if $L^{'}$ is strongly collapsible, then we follow the same method starting with $$\text{SD}(\varphi \circ p_{1},\cdots,\varphi \circ p_{n}) = \text{SD}(1_{L^{'}} \circ \varphi \circ p_{1},\cdots,1_{L^{'}} \circ \varphi \circ p_{n})$$ by Proposition \ref{prop1} again.
\end{proof}

\begin{theorem}
	For a simplicial finite-fibration $\varphi : L \rightarrow N$, we have that $$\text{TC}_{n}(\varphi) \leq \min\{\text{TC}_{n}(L),\text{TC}_{n}(N)\}.$$
\end{theorem}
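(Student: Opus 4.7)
The plan is to prove the two bounds $\text{TC}_n(\varphi) \leq \text{TC}_n(L)$ and $\text{TC}_n(\varphi) \leq \text{TC}_n(N)$ separately, both as direct applications of Proposition \ref{prop1}.

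First, the inequality $\text{TC}_n(\varphi) \leq \text{TC}_n(L)$ is essentially already Proposition \ref{prop3} iii). Unpacking, $\text{TC}_n(\varphi) = \text{SD}(\varphi \circ p_1, \ldots, \varphi \circ p_n)$ and $\text{TC}_n(L) = \text{SD}(p_1, \ldots, p_n)$. Applying Proposition \ref{prop1} i) with the common left factor $\psi_i = \varphi$ (so $\psi_i \sim \psi_{i+1}$ is trivial), the bound follows immediately.

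For the second inequality $\text{TC}_n(\varphi) \leq \text{TC}_n(N)$, I would consider the simplicial map $\varphi^n : L^n \to N^n$ given by the $n$-fold categorical product of $\varphi$ with itself, together with the projections $q_i : N^n \to N$. The key observation is the identity
\begin{eqnarray*}
q_i \circ \varphi^n = \varphi \circ p_i \qquad \text{for every } i = 1, \ldots, n,
\end{eqnarray*}
which follows directly from the definitions of the categorical product and the projection maps. Hence
\begin{eqnarray*}
\text{TC}_n(\varphi) = \text{SD}(q_1 \circ \varphi^n, \ldots, q_n \circ \varphi^n).
\end{eqnarray*}
Now I would apply Proposition \ref{prop1} ii), this time with $\varphi_i = q_i : N^n \to N$ and $\psi_i = \varphi^n : L^n \to N^n$. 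Since all the $\psi_i$ are literally equal, the hypothesis $\psi_i \sim \psi_{i+1}$ is automatic, and the proposition yields
\begin{eqnarray*}
\text{SD}(q_1 \circ \varphi^n, \ldots, q_n \circ \varphi^n) \leq \text{SD}(q_1, \ldots, q_n) = \text{TC}_n(N).
\end{eqnarray*}

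Combining the two bounds gives the result. There is no serious obstacle here: the only subtle point is verifying that $\varphi^n$ is a bona fide simplicial map between the categorical products (which is immediate from the universal property of $\Pi$) and that the projection identity above holds on the nose, not merely up to contiguity, so that Proposition \ref{prop1} ii) applies without needing any refinement. The surjectivity and fibration hypotheses on $\varphi$ are not needed for this particular bound; they were used earlier in the section to ensure that $\text{TC}_n(\varphi)$ is a well-defined invariant.
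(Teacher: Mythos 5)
Your proposal is correct and follows essentially the same route as the paper: the first bound is Proposition \ref{prop3} iii) (i.e.\ Proposition \ref{prop1} i) with common left factor $\varphi$), and the second uses the on-the-nose identity $\varphi \circ p_i = q_i \circ \varphi^{n}$ together with Proposition \ref{prop1} ii) applied with $\psi_i = \varphi^{n}$. No gaps; your remark that the identity holds exactly (not just up to contiguity) is precisely the verification the paper carries out elementwise.
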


\begin{proof}
	It is enough to show that $$\text{TC}_{n}(\varphi) \leq \text{TC}_{n}(N)$$ from Proposition \ref{prop3} iii). Assume that $p_{i} : L^{n} \rightarrow L$ and $q_{i} : N^{n} \rightarrow N$ are projection maps for each $i = 1,\cdots,n$. Then $\varphi \circ p_{i} = q_{i} \circ \varphi^{n}$. Indeed,
	\begin{eqnarray*}
		\varphi \circ p_{i}([x_{1}],\cdots,[x_{n}]) = \varphi([x_{i}]) &=& [x^{'}_{i}] \\ 
		&=& q_{i}([x^{'}_{1}],\cdots,[x^{'}_{n}])\\ 
		&=& q_{i} \circ \varphi^{n}([x_{1}],\cdots,[x_{n}])
	\end{eqnarray*}
    for any $[x_{1}],\cdots,[x_{n}] \in L$ and $[x^{'}_{1}],\cdots,[x^{'}_{n}] \in N$. This concludes that
    \begin{eqnarray*}
    	\text{SD}(\varphi \circ p_{1},\cdots,\varphi \circ p_{n}) = \text{SD}(q_{1} \circ \varphi^{n},\cdots,q_{n} \circ \varphi^{n}) \leq \text{SD}(q_{1},\cdots,q_{n}).
    \end{eqnarray*}
\end{proof}

\begin{theorem}
	For any surjective simplicial finite-fibrations $\varphi : L \rightarrow N$ and \linebreak$\psi : N \rightarrow K$, we have that $$\text{TC}_{n}(\psi \circ \varphi) \leq \min\{\text{TC}_{n}(\varphi),\text{TC}_{n}(\psi)\}.$$
\end{theorem}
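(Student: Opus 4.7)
The plan is to prove the two inequalities $\text{TC}_{n}(\psi \circ \varphi) \leq \text{TC}_{n}(\varphi)$ and $\text{TC}_{n}(\psi \circ \varphi) \leq \text{TC}_{n}(\psi)$ separately; both follow from a suitable specialisation of Proposition \ref{prop1}, with the second bound using the same factorisation $\varphi \circ p_i = q_i \circ \varphi^n$ that appeared in the preceding theorem. Throughout I write $p_i : L^n \to L$ and $q_i : N^n \to N$ for the projection maps.

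For the first bound I unfold the definition
\begin{eqnarray*}
\text{TC}_n(\psi \circ \varphi) = \text{SD}\bigl((\psi \circ \varphi) \circ p_1, \ldots, (\psi \circ \varphi) \circ p_n\bigr) = \text{SD}\bigl(\psi \circ (\varphi \circ p_1), \ldots, \psi \circ (\varphi \circ p_n)\bigr),
\end{eqnarray*}
and then apply Proposition \ref{prop1} i) with the outer maps $\psi_i := \psi : N \to K$ all equal (so trivially $\psi_i \sim \psi_{i+1}$) and the inner maps $\varphi \circ p_i : L^n \to N$. The conclusion of that proposition gives $\text{TC}_n(\psi \circ \varphi) \leq \text{SD}(\varphi \circ p_1, \ldots, \varphi \circ p_n) = \text{TC}_n(\varphi)$.

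For the second bound I factor through $\varphi^n : L^n \to N^n$ using the identity $\varphi \circ p_i = q_i \circ \varphi^n$ (verified by a pointwise check, exactly as in the preceding theorem), so
\begin{eqnarray*}
(\psi \circ \varphi) \circ p_i = \psi \circ (q_i \circ \varphi^n) = (\psi \circ q_i) \circ \varphi^n.
\end{eqnarray*}
I then invoke Proposition \ref{prop1} ii) with the inner maps $\psi_i := \varphi^n : L^n \to N^n$ all equal and the outer maps $\psi \circ q_i : N^n \to K$. This produces $\text{TC}_n(\psi \circ \varphi) \leq \text{SD}(\psi \circ q_1, \ldots, \psi \circ q_n) = \text{TC}_n(\psi)$. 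Taking the minimum of the two bounds finishes the argument.

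There is no real obstacle here: both applications of Proposition \ref{prop1} only require that a single fixed map be trivially contiguous to itself, and the factorisation $\varphi \circ p_i = q_i \circ \varphi^n$ has already been spelled out in the excerpt. The only small bookkeeping point is to make sure to pick the correct variant of Proposition \ref{prop1} (left composition for the $\text{TC}_n(\varphi)$ bound, right composition for the $\text{TC}_n(\psi)$ bound) so that the domains and codomains match.
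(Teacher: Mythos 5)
Your proposal is correct and matches the paper's argument essentially step for step: the bound $\text{TC}_n(\psi\circ\varphi)\leq\text{TC}_n(\varphi)$ via left-composition (Proposition \ref{prop1} i)) and the bound $\text{TC}_n(\psi\circ\varphi)\leq\text{TC}_n(\psi)$ via the factorisation $\varphi\circ p_i = q_i\circ\varphi^n$ together with right-composition (Proposition \ref{prop1} ii)). No differences worth noting.
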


\begin{proof}
	Let $p_{i} : L^{n} \rightarrow L$ and $q_{i} : N^{n} \rightarrow N$ be the projection maps with each $i = 1,\cdots,n$. Then $\varphi \circ p_{i} = q_{i} \circ (\varphi,\cdots,\varphi)$. Indeed,
	\begin{eqnarray*}
		\varphi \circ p_{i}([x_{1}],\cdots,[x_{n}]) &=& \varphi([x_{i}])\\ 
		&=& q_{i}([\varphi(x_{1})],\cdots,[\varphi(x_{n})])\\ 
		&=& q_{i} \circ (\varphi,\cdots,\varphi)([x_{1}],\cdots,[x_{n}])
	\end{eqnarray*}
	for any $[x_{1}],\cdots,[x_{n}] \in L$ and $[x^{'}_{1}],\cdots,[x^{'}_{n}] \in N$. It follows that
	\begin{eqnarray*}
		\text{SD}(\psi \circ \varphi \circ p_{1},\cdots,\psi \circ \varphi \circ p_{n}) \leq \text{SD}(\varphi \circ p_{1},\cdots,\varphi \circ p_{n}),
	\end{eqnarray*}
    and
    \begin{eqnarray*}
    	\text{SD}(\psi \circ \varphi \circ p_{1},\cdots,\psi \circ \varphi \circ p_{n}) &=& \text{SD}(\psi \circ q_{1} \circ (\varphi,\cdots,\varphi),\cdots,\psi \circ q_{n} \circ (\varphi,\cdots,\varphi))\\
    	&\leq& \text{SD}(\psi \circ q_{1},\cdots,\psi \circ q_{n}),
    \end{eqnarray*}
    which conclude that TC$_{n}(\psi \circ \varphi) \leq$ TC$_{n}(\varphi)$ and TC$_{n}(\psi \circ \varphi) \leq$ TC$_{n}(\psi)$, respectively.
\end{proof}

\begin{corollary}
	Given any surjective simplicial finite-fibration $\varphi : L \rightarrow L^{'}$,
	
	\textbf{i)} TC$_{n}(\varphi) =$ TC$_{n}(L^{'})$ when $\varphi$ admits a right strong equivalence.
	
	\textbf{ii)} TC$_{n}(\varphi) =$ TC$_{n}(L)$ when $\varphi$ admits a left strong equivalence.
	
	\textbf{ii)} TC$_{n}(\varphi) =$ TC$_{n}(L) =$ TC$_{n}(L^{'})$ when $\varphi$ admits a strong equivalence.
\end{corollary}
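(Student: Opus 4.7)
The plan is to handle (i) and (ii) separately and then derive (iii) trivially. Both one-sided cases follow the same two-step pattern: the inequality $\text{TC}_n(\varphi)\leq \text{TC}_n(L')$ or $\text{TC}_n(\varphi)\leq \text{TC}_n(L)$ is already delivered by the previous theorem, so the work is in the reverse inequality, which I will obtain by using the one-sided inverse to reroute the projections through Proposition \ref{prop1}.

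For (i), let $\mu:L'\to L$ satisfy $\varphi\circ\mu\sim 1_{L'}$, and let $q_i:(L')^n\to L'$ denote the projections on $(L')^n$. The componentwise map $\mu^n:(L')^n\to L^n$ is simplicial by the definition of the categorical product, and satisfies the obvious identity $p_i\circ\mu^n=\mu\circ q_i$. Consequently $\varphi\circ p_i\circ\mu^n=(\varphi\circ\mu)\circ q_i\sim q_i$ for each $i$, so Proposition \ref{prop2}(i) yields
\begin{eqnarray*}
\text{SD}(q_1,\dots,q_n)=\text{SD}(\varphi\circ p_1\circ\mu^n,\dots,\varphi\circ p_n\circ\mu^n).
\end{eqnarray*}
Then Proposition \ref{prop1}(ii), applied with the precomposition $\psi_i=\mu^n$ taken constant in $i$ (so $\psi_i\sim\psi_{i+1}$ trivially) and $\varphi_i=\varphi\circ p_i$, bounds the right-hand side above by $\text{SD}(\varphi\circ p_1,\dots,\varphi\circ p_n)=\text{TC}_n(\varphi)$. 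Hence $\text{TC}_n(L')\leq\text{TC}_n(\varphi)$, and combined with the previous theorem we get equality.

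Part (ii) is symmetric: with a left strong equivalence $\mu:L'\to L$ satisfying $\mu\circ\varphi\sim 1_L$, we have $p_i=1_L\circ p_i\sim\mu\circ(\varphi\circ p_i)$, so Proposition \ref{prop2}(i) gives $\text{SD}(p_1,\dots,p_n)=\text{SD}(\mu\circ\varphi\circ p_1,\dots,\mu\circ\varphi\circ p_n)$, and then Proposition \ref{prop1}(i) with the postcomposition $\psi_i=\mu$ constant in $i$ bounds this by $\text{SD}(\varphi\circ p_1,\dots,\varphi\circ p_n)=\text{TC}_n(\varphi)$, giving $\text{TC}_n(L)\leq\text{TC}_n(\varphi)$. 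Part (iii) is then immediate since a strong equivalence is simultaneously a left and right strong equivalence, so (i) and (ii) both apply. The only real subtlety is keeping straight which direction of Proposition \ref{prop1} to invoke (pre- versus post-composition) and noticing that we only need the inequality-half of that proposition, which requires just $\psi_i\sim\psi_{i+1}$ and is trivially satisfied when the $\psi_i$ coincide; the additional hypotheses for the equality-half are not needed here because the ambient equality is instead forced by combining with the previous theorem.
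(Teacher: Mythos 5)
Your argument is correct, and its skeleton is the same sandwich the paper uses: one inequality comes from the preceding theorem $\mathrm{TC}_n(\varphi)\leq\min\{\mathrm{TC}_n(L),\mathrm{TC}_n(L')\}$, and the reverse inequality comes from the one-sided strong inverse. The difference is in how the reverse inequality is executed. The paper argues at the level of $\mathrm{TC}_n$ of composed maps, writing $\mathrm{TC}_n(L')=\mathrm{TC}_n(1_{L'})=\mathrm{TC}_n(\varphi\circ\omega)\leq\mathrm{TC}_n(\varphi)$ (and dually $\mathrm{TC}_n(L)=\mathrm{TC}_n(\gamma\circ\varphi)\leq\mathrm{TC}_n(\varphi)$ for part ii), where the last inequality is the composition theorem $\mathrm{TC}_n(\psi\circ\varphi)\leq\min\{\mathrm{TC}_n(\varphi),\mathrm{TC}_n(\psi)\}$. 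You instead unwind everything to the contiguity-distance level: using $p_i\circ\mu^n=\mu\circ q_i$, contiguity invariance (Proposition \ref{prop2} i), and only the inequality half of Proposition \ref{prop1} with constant $\psi_i$, you get $\mathrm{SD}(q_1,\dots,q_n)\leq\mathrm{SD}(\varphi\circ p_1,\dots,\varphi\circ p_n)$ directly. This is essentially the content hidden inside the paper's citation, but your version is slightly more careful: the composition theorem as stated requires both maps to be surjective finite-fibrations, and the strong-equivalence inverse $\omega$ (resp.\ $\gamma$) need not be one, nor need $\varphi\circ\omega$ be surjective for $\mathrm{TC}_n$ of it to be literally defined; your SD-level computation sidesteps that hypothesis issue entirely, at the cost of a few extra lines. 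Part (iii) is handled identically in both.
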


\begin{proof}
	\textbf{i)} Let $\omega : L^{'} \rightarrow L$ be the right strong equivalence of $\varphi$, i.e., $\varphi \circ \omega \sim 1_{L^{'}}$. Then we find
	\begin{eqnarray*}
		\text{TC}_{n}(L^{'}) = \text{TC}_{n}(1_{L^{'}}) = \text{TC}_{n}(\varphi \circ \omega) \leq \text{TC}_{n}(\varphi) \leq \text{TC}_{n}(L^{'}).
	\end{eqnarray*}
    \textbf{ii)} Let $\gamma : L^{'} \rightarrow L$ be the left strong equivalence of $\varphi$, i.e., $\gamma \circ \varphi \sim 1_{L}$. Then we find
    \begin{eqnarray*}
    	\text{TC}_{n}(L) = \text{TC}_{n}(1_{L}) = \text{TC}_{n}(\gamma \circ \varphi) \leq \text{TC}_{n}(\varphi) \leq \text{TC}_{n}(L).
    \end{eqnarray*}
    \textbf{iii)} The result is the direct consequence of the first two parts.
\end{proof}

\section{scat-Related Results}
\label{sec:5}

\quad We have some results between scat and TC of a surjective simplicial fibration $\varphi_{1} : L \rightarrow L^{'}$.

\begin{proposition}\label{prop4}
	scat$(\varphi) \leq$ TC$(\varphi)$ for a given surjective simplicial finite-fibration $\varphi : L \rightarrow L^{'}$.
\end{proposition}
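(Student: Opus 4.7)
The plan is to work with the alternative formulation of $\text{TC}(\varphi)$ provided by Theorem \ref{teo1}, namely $\text{TC}(\varphi) = \text{SD}(\varphi \circ \pi_{1}, \pi_{2})$, where $\pi_{1} : L \times L^{'} \to L$ and $\pi_{2} : L \times L^{'} \to L^{'}$ are the projections. Then $\text{scat}(\varphi) = \text{SD}(\varphi, \varphi \circ c_{v_{0}})$ by the defining relation recalled in Section \ref{sec:1}, so the task reduces to showing $\text{SD}(\varphi, \varphi \circ c_{v_{0}}) \leq \text{SD}(\varphi \circ \pi_{1}, \pi_{2})$. The idea is the standard one used in the topological setting: pull back a motion-planning cover along a carefully chosen simplicial inclusion of $L$ into $L \times L^{'}$.

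Concretely, suppose $\text{TC}(\varphi) = n$ and choose a cover $L \times L^{'} = U_{0} \cup \cdots \cup U_{n}$ by subcomplexes such that $(\varphi \circ \pi_{1})|_{U_{k}} \sim \pi_{2}|_{U_{k}}$ for every $k$. Define a map $j : L \to L \times L^{'}$ on vertices by $j(x) = (v_{0}, \varphi(x))$. First I would verify that $j$ is simplicial: if $\sigma \in L$, then $j(\sigma) = \{v_{0}\} \times \varphi(\sigma)$, and by the characterization of simplexes in the categorical product, $\pi_{1}(j(\sigma)) = \{v_{0}\} \in L$ and $\pi_{2}(j(\sigma)) = \varphi(\sigma) \in L^{'}$, so $j(\sigma)$ is a simplex of $L \times L^{'}$. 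Next I would set $V_{k} = j^{-1}(U_{k})$ for $k = 0, \dots, n$; since $j$ is simplicial, each $V_{k}$ is a subcomplex of $L$, and because the $U_{k}$ cover $L \times L^{'}$, the $V_{k}$ cover $L$.

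It remains to transport the contiguities $(\varphi \circ \pi_{1})|_{U_{k}} \sim \pi_{2}|_{U_{k}}$ through $j$. By construction, $\pi_{1} \circ j = c_{v_{0}} : L \to L$ and $\pi_{2} \circ j = \varphi : L \to L^{'}$. Precomposing the contiguity on $U_{k}$ with the restriction $j|_{V_{k}} : V_{k} \to U_{k}$ (and using that contiguity is preserved under precomposition with a simplicial map), I obtain
\begin{eqnarray*}
(\varphi \circ c_{v_{0}})\big|_{V_{k}} \;=\; \varphi \circ \pi_{1} \circ j\big|_{V_{k}} \;\sim\; \pi_{2} \circ j\big|_{V_{k}} \;=\; \varphi\big|_{V_{k}}
\end{eqnarray*}
for every $k = 0, \dots, n$. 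This exhibits a cover of $L$ of cardinality $n+1$ witnessing $\text{SD}(\varphi, \varphi \circ c_{v_{0}}) \leq n$, and therefore $\text{scat}(\varphi) \leq \text{TC}(\varphi)$.

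The only subtle step is the check that $j$ is genuinely simplicial and that $V_{k} = j^{-1}(U_{k})$ is a bona fide subcomplex of $L$; everything else is formal manipulation of the definitions. The role of surjectivity of $\varphi$ is not required for this inequality itself, but it is needed implicitly so that $\text{TC}(\varphi)$ and the identification in Theorem \ref{teo1} are defined in the sense of the paper.
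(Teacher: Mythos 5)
Your proof is correct, and it rests on the same basic mechanism as the paper's: restrict the contiguity-distance description of TC$(\varphi)$ along a slice embedding of $L$ into the product so that the two maps become $\varphi$ and $\varphi\circ c_{v_{0}}$, whence $\mathrm{SD}(\varphi,\varphi\circ c_{v_{0}})=\mathrm{scat}(\varphi)$ bounds TC$(\varphi)$ from below. The implementation differs, though. The paper works on $L^{2}$: it starts from $\mathrm{TC}(\varphi)=\mathrm{SD}(\varphi\circ p_{1},\varphi\circ p_{2})$, i.e.\ the identification $\mathrm{TC}_{2}(\varphi)=\mathrm{TC}(\varphi)$ of Section \ref{sec:4} (which itself invokes Theorem \ref{teo1} together with the surjectivity-based strong equivalence $1_{L}\times\varphi$), and then applies the precomposition inequality of Proposition \ref{prop1} ii) to $i_{1}(\sigma)=(\sigma,v_{0})$, for which $\varphi\circ p_{1}\circ i_{1}=\varphi$ and $\varphi\circ p_{2}\circ i_{1}=\varphi\circ c_{v_{0}}$. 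You instead use Theorem \ref{teo1} directly on $L\times L^{'}$, take the mirror-image slice $j(x)=(v_{0},\varphi(x))$, and re-derive the needed inequality by hand: pulling the cover back to $V_{k}=j^{-1}(U_{k})$ and checking that $j$ is simplicial, that each $V_{k}$ is a subcomplex, and that contiguity survives precomposition (all of which you verify correctly). This makes your argument slightly more self-contained, since it bypasses the $\mathrm{TC}_{2}$ identification entirely, at the cost of redoing what Proposition \ref{prop1} ii) already provides; citing that proposition with all $\psi_{i}=j$ would compress your proof to the paper's one-line chain of (in)equalities. Your closing remark is also accurate: surjectivity plays no role in the covering argument itself, but is needed so that $\pi_{\varphi}$ and the equality of Theorem \ref{teo1} are available in the paper's framework.
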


\begin{proof}
	Let $c_{v_{0}}$ be the constant simplicial map on $L$ at the point $v_{0}$, $p_{i}$ the projection simplicial map on $L$ with each $i = 1,2$, and $i_{1} : L \rightarrow L^{2}$ a simplicial map defined by $i_{1}(\sigma) = (\sigma,v_{0})$. Then we have that
	\begin{eqnarray*}
		\text{TC}(\varphi) &=& \text{SD}(\varphi \circ p_{1},\varphi \circ p_{2}) \geq \text{SD}(\varphi \circ p_{1} \circ i_{1},\varphi \circ p_{2} \circ i_{1})\\ 
		&=& \text{SD}(\varphi \circ 1_{L},\varphi \circ c_{v_{0}}) = \text{SD}(\varphi,\varphi \circ c_{v_{0}})\\ 
		&=& \text{scat}(\varphi).
	\end{eqnarray*}
\end{proof}

\begin{proposition}\label{prop5}
	scat$(L) \leq$ TC$(\varphi)$ for a given bijective simplicial finite-fibration $\varphi : L \rightarrow L^{'}$.
\end{proposition}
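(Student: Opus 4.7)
The plan is to interpolate between scat$(L)$ and TC$(\varphi)$ through scat$(\varphi)$: Proposition \ref{prop4} already gives scat$(\varphi) \leq$ TC$(\varphi)$, so it suffices to show the auxiliary bound scat$(L) \leq$ scat$(\varphi)$, and the bijectivity hypothesis is what makes this possible.

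First I would argue that a bijective simplicial finite-fibration $\varphi : L \rightarrow L^{'}$ admits a simplicial inverse $\varphi^{-1} : L^{'} \rightarrow L$. On vertices the inverse is well-defined by bijectivity; to see that it preserves simplices, pick $\sigma^{'} = [w_{0},\ldots,w_{k}] \in L^{'}$, set $v_{i} = \varphi^{-1}(w_{i})$, and proceed by induction on $k$. The case $k = 1$ is handled by applying the fibration property to the edge path $G : \{\ast\} \times I_{1} \rightarrow L^{'}$ from $w_{0}$ to $w_{1}$ with initial lift $g(\ast,0) = v_{0}$; bijectivity pins down $\widetilde{G}(\ast,1) = v_{1}$, and then $[v_{0},v_{1}]$ appears as a face of the image simplex $\widetilde{G}(\{\ast\} \times [0,1]) \in L$. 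For $k \geq 2$, one takes a suitable $N$ for which $N \times I_{m}$ contains a $k$-simplex whose image under a carefully chosen $G : N \times I_{m} \rightarrow L^{'}$ equals $\sigma^{'}$, with the base-level data $g : N \times \{0\} \rightarrow L$ supplied by the inductively known lifts of the proper faces of $\sigma^{'}$; the lift $\widetilde{G}$ then realises $[v_{0},\ldots,v_{k}]$ as a face of a simplex in $L$.

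With $\mu = \varphi^{-1}$ simplicial and $\mu \circ \varphi = 1_{L}$, I would apply Proposition \ref{prop1} i) taking $\psi_{1} = \psi_{2} = \mu$ on the pair $\varphi$, $\varphi \circ c_{v_{0}}$:
\begin{eqnarray*}
\text{scat}(L) &=& \text{SD}(1_{L}, c_{v_{0}}) = \text{SD}(\mu \circ \varphi, \mu \circ \varphi \circ c_{v_{0}}) \\
&\leq& \text{SD}(\varphi, \varphi \circ c_{v_{0}}) = \text{scat}(\varphi) \leq \text{TC}(\varphi),
\end{eqnarray*}
where the last inequality is Proposition \ref{prop4} and the second equality uses Proposition \ref{prop2} i) together with the contiguities $\mu \circ \varphi \sim 1_{L}$ and $\mu \circ \varphi \circ c_{v_{0}} \sim c_{v_{0}}$.

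The main obstacle is the inductive simplex-lifting argument of the first step in dimensions $k \geq 2$: one must engineer the commutative diagram required by the fibration so that the map $G : N \times I_{m} \rightarrow L^{'}$ actually covers the top simplex $\sigma^{'}$ while the initial lift $g$ remains compatible with the inductively obtained $(k-1)$-dimensional lifts. If the paper's convention for \emph{bijective} is taken to mean bijective on simplices, this first step is immediate (the inverse sends simplices to simplices by definition) and the proof reduces to the display above combined with Proposition \ref{prop4}.
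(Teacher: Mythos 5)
Your proof is correct and, in substance, it is the paper's proof: the paper's chain is
$\mathrm{TC}(\varphi)=\mathrm{SD}(\varphi\circ p_{1},\varphi\circ p_{2})\geq \mathrm{SD}\bigl((\omega\circ\varphi)\circ(p_{1}\circ i_{1}),(\omega\circ\varphi)\circ(p_{2}\circ i_{1})\bigr)=\mathrm{SD}(1_{L},c_{v_{0}})=\mathrm{scat}(L)$,
with $\omega$ a simplicial left inverse of $\varphi$, and since $\mathrm{SD}(\varphi\circ p_{1}\circ i_{1},\varphi\circ p_{2}\circ i_{1})=\mathrm{SD}(\varphi,\varphi\circ c_{v_{0}})=\mathrm{scat}(\varphi)$, your factorization $\mathrm{scat}(L)\leq\mathrm{scat}(\varphi)\leq\mathrm{TC}(\varphi)$ performs exactly the same two composition moves (precompose with $i_{1}$, which is Proposition \ref{prop4}, and postcompose with $\mu=\omega$ via Propositions \ref{prop1} i) and \ref{prop2} i)), merely in the other order.

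The one place you diverge is the existence of the simplicial inverse: the paper simply asserts that injectivity of $\varphi$ yields a simplicial $\omega$ with $\omega\circ\varphi=1_{L}$, an assertion that is false for a merely injective simplicial map (the inclusion of the boundary of a $2$-simplex into the full $2$-simplex admits no simplicial retraction), so your instinct to derive it from bijectivity plus the fibration property is the right one, and your induction does close. Concretely, with $m=1$ throughout: given $\sigma^{'}=[w_{0},\ldots,w_{k}]\in L^{'}$ and, inductively, $[v_{0},\ldots,v_{k-1}]\in L$, take $N=[v_{0},\ldots,v_{k-1}]$, define $G:N\times I_{1}\rightarrow L^{'}$ by $G(v_{j},0)=w_{j}$ and $G(v_{j},1)=w_{k}$ (simplicial, since every image lies in a face of $\sigma^{'}$) and $g(v_{j},0)=v_{j}$; then $\varphi\circ g=G\circ i^{1}$, the lift $\widetilde{G}$ satisfies $\varphi(\widetilde{G}(v_{j},1))=w_{k}$, hence $\widetilde{G}(v_{j},1)=v_{k}$ by injectivity, and the prism simplex $\{(v_{0},0),\ldots,(v_{k-1},0),(v_{0},1)\}$ of $N\times I_{1}$ (both projections are simplices) is carried by $\widetilde{G}$ onto $\{v_{0},\ldots,v_{k}\}$, which is therefore a simplex of $L$. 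So the step you flagged as the main obstacle is completable and is not a gap relative to the paper; if anything, your write-up treats more carefully the very point the paper's proof glosses over.
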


\begin{proof}
	Let $c_{v_{0}}$ be the constant simplicial map on $L$ at the point $v_{0}$, $p_{i}$ the projection simplicial map on $L$ with each $i = 1,2$, $i_{1} : L \rightarrow L^{2}$ a simplicial map defined by $i_{1}(\sigma) = (\sigma,v_{0})$, and $i_{2} : L \rightarrow L^{2}$ a simplicial map defined by $i_{1}(\sigma) = (v_{0},\sigma)$. Then we have that $p_{2} \circ i_{1} = c_{v_{0}} = p_{1} \circ i_{2}$. Since $\varphi$ is injective, there exists a simplicial map $\omega : L^{'} \rightarrow L$ with $\omega \circ \varphi = 1_{L}$. Moreover, we get
	\begin{eqnarray*}
		\text{TC}(\varphi) &=& \text{SD}(\varphi \circ p_{1},\varphi \circ p_{2}) \geq \text{SD}((\omega \circ \varphi) \circ (p_{1} \circ i_{1}),(\omega \circ \varphi) \circ (p_{2} \circ i_{1}))\\ 
		&=& \text{SD}(1_{L} \circ 1_{L},1_{L} \circ c_{v_{0}}) \\ 
		&=& \text{SD}(1_{L},c_{v_{0}}) = \text{scat}(L).
	\end{eqnarray*}
\end{proof}

\begin{proposition}\label{prop6}
	scat$(\varphi) \leq$ scat$(L)$ for a simplicial finite-fibration $\varphi : L \rightarrow L^{'}$.
\end{proposition}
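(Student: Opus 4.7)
The plan is to reduce this directly to Proposition \ref{prop1} i) with $m=2$, using the characterizations $\text{scat}(L) = \text{SD}(1_L, c_{v_0})$ and $\text{scat}(\varphi) = \text{SD}(\varphi, \varphi \circ c_{v_0})$ recalled in the Preliminaries. The idea is that post-composing the pair $(1_L, c_{v_0}): L \rightarrow L$ with the single map $\varphi : L \rightarrow L'$ cannot increase the contiguity distance, since $\varphi$ is trivially contiguous to itself.

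Concretely, I would set $\varphi_1 = 1_L$ and $\varphi_2 = c_{v_0}$ (both $L \rightarrow L$), and take $\psi_1 = \psi_2 = \varphi$ as the outer maps. The hypothesis $\psi_1 \sim \psi_2$ of Proposition \ref{prop1} i) is satisfied automatically (they are equal, so contiguous in zero steps), so the conclusion gives
\begin{eqnarray*}
\text{SD}(\varphi \circ 1_L, \varphi \circ c_{v_0}) \leq \text{SD}(1_L, c_{v_0}).
\end{eqnarray*}
Since $\varphi \circ 1_L = \varphi$, the left-hand side is exactly $\text{SD}(\varphi, \varphi \circ c_{v_0}) = \text{scat}(\varphi)$, while the right-hand side is $\text{scat}(L)$, yielding the desired inequality.

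There is essentially no obstacle here: the statement is a formal consequence of the general post-composition inequality for higher contiguity distance, and neither the fibration hypothesis on $\varphi$ nor surjectivity is needed for the bound itself (they are used in the Preliminaries only to ensure the relevant objects like $\text{scat}(\varphi)$ are well defined via the stated formula). The only minor point to mention in the write-up is that the cover $L = L_0 \cup \cdots \cup L_n$ realizing $\text{scat}(L)$ automatically realizes $\text{scat}(\varphi)$, since on each $L_k$ a contiguity class from $1_L|_{L_k}$ to $c_{v_0}|_{L_k}$ is carried by $\varphi$ to a contiguity class from $\varphi|_{L_k}$ to $(\varphi \circ c_{v_0})|_{L_k}$, which is precisely the content of Proposition \ref{prop1} i) unwound at the level of subcomplexes.
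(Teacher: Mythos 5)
Your proof is correct and follows essentially the same route as the paper: both arguments are a direct application of the post-composition inequality of Proposition \ref{prop1} i) with the two outer maps equal, the only cosmetic difference being that the paper rewrites $1_{L}$ and $c_{v_{0}}$ as $p_{1}\circ i_{1}$ and $p_{1}\circ i_{2}$ and bounds by $\mathrm{SD}(i_{1},i_{2})$, while you bound by $\mathrm{SD}(1_{L},c_{v_{0}})$ directly; both quantities equal $\mathrm{scat}(L)$. Your observation that neither the fibration hypothesis nor surjectivity is actually used in the estimate is also consistent with the paper's proof.
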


\begin{proof}
	Let $c_{v_{0}}$ be the constant simplicial map on $L$ at the point $v_{0}$, $p_{i}$ the projection simplicial map on $L$ with each $i = 1,2$, $i_{1} : L \rightarrow L^{2}$ a simplicial map defined by $i_{1}(\sigma) = (\sigma,v_{0})$, and $i_{2} : L \rightarrow L^{2}$ a simplicial map defined by $i_{1}(\sigma) = (v_{0},\sigma)$. Then we find
	\begin{eqnarray*}
		\text{scat}(\varphi) &=& \text{SD}(\varphi \circ 1_{L},\varphi \circ c_{v_{0}})
		= \text{SD}(\varphi \circ p_{1} \circ i_{1},\varphi \circ p_{1} \circ i_{2})\\
		&\leq& \text{SD}(i_{1},i_{2}) = \text{scat}(L).
	\end{eqnarray*}
\end{proof}

\begin{theorem}\label{thm2}
	Given a bijective simplicial finite-fibration $\varphi : L \rightarrow L^{'}$, we have
	\begin{eqnarray*}
		\text{scat}(\varphi) \leq \text{scat}(L) \leq \text{TC}(\varphi) \leq \min\{\text{TC}(L),\text{TC}_{n}(\varphi)\} \leq \text{TC}_{n}(L).
	\end{eqnarray*}
\end{theorem}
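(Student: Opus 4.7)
The plan is to verify the four links of the chain separately by assembling results already established in the paper; no new machinery is needed, and the bijectivity hypothesis on $\varphi$ will enter at only one link.

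First, for the leftmost inequality $\text{scat}(\varphi) \leq \text{scat}(L)$, I would invoke Proposition \ref{prop6} directly, which does not even require bijectivity. For the next link $\text{scat}(L) \leq \text{TC}(\varphi)$, I would apply Proposition \ref{prop5}; this is precisely the step where the bijectivity of $\varphi$ enters, since that proof uses injectivity to produce a left inverse $\omega : L^{'} \to L$ satisfying $\omega \circ \varphi = 1_{L}$.

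For the middle link $\text{TC}(\varphi) \leq \min\{\text{TC}(L),\text{TC}_{n}(\varphi)\}$ I would split into two sub-bounds. The inequality $\text{TC}(\varphi) \leq \text{TC}(L)$ follows from Proposition \ref{prop3} iv) together with $\text{TC}(L) = \text{TC}_{2}(L)$. The inequality $\text{TC}(\varphi) \leq \text{TC}_{n}(\varphi)$ follows by combining the identification $\text{TC}(\varphi) = \text{TC}_{2}(\varphi)$ recorded just after Definition \ref{def2} with the monotonicity $\text{TC}_{2}(\varphi) \leq \cdots \leq \text{TC}_{n}(\varphi)$ from Proposition \ref{prop3} i).

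For the final inequality $\min\{\text{TC}(L),\text{TC}_{n}(\varphi)\} \leq \text{TC}_{n}(L)$ it suffices to bound one of the two arguments of the minimum; the monotonicity remark just after Theorem \ref{thm1} gives $\text{TC}(L) = \text{TC}_{2}(L) \leq \text{TC}_{n}(L)$ for every $n \geq 2$, which closes the chain. The main obstacle will be bookkeeping rather than content: ensuring each cited result applies with its stated hypotheses, that bijectivity is invoked only at the second link, and that the identification $\text{TC}_{2}(\varphi) = \text{TC}(\varphi)$ indeed allows the higher-dimensional statements to upgrade the two-variable ones.
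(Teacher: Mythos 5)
Your proposal is correct and follows essentially the same route as the paper, which proves the chain by citing Proposition \ref{prop6}, Proposition \ref{prop5}, and Proposition \ref{prop3}. You simply spell out more explicitly than the paper does how the middle and final links follow (via Proposition \ref{prop3} i), iv), the identification $\text{TC}_{2}(\varphi)=\text{TC}(\varphi)$, and the monotonicity of $\text{TC}_{n}(L)$), which is a welcome clarification rather than a different method.
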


\begin{proof}
	The result comes from Proposition \ref{prop6}, Proposition \ref{prop5}, and Proposition \ref{prop3} iv), respectively.
\end{proof}

\quad Combining with Corollary 2.27 of \cite{BoratPamukVergili:2023}, Theorem \ref{thm2} concludes the following result:

\begin{corollary}
	Given a bijective simplicial finite-fibration $\varphi : L \rightarrow L^{'}$, we have
	\begin{eqnarray*}
		\text{scat}(\varphi) \leq \text{scat}(L) \leq \text{TC}(\varphi) \leq \text{TC}(L) \leq \text{scat}(L^{2}).
	\end{eqnarray*}
\end{corollary}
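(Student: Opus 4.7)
The proof is a direct concatenation of two chains of inequalities, so the main task is to identify which pieces of the previously established results to keep and which to discard. The plan is to apply Theorem \ref{thm2} to obtain the first four terms and then append Corollary 2.27 of \cite{BoratPamukVergili:2023} to reach $\text{scat}(L^{2})$.

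First, since $\varphi : L \to L^{'}$ is a bijective simplicial finite-fibration by hypothesis, Theorem \ref{thm2} applies and yields
\begin{eqnarray*}
\text{scat}(\varphi) \leq \text{scat}(L) \leq \text{TC}(\varphi) \leq \min\{\text{TC}(L),\text{TC}_{n}(\varphi)\} \leq \text{TC}_{n}(L).
\end{eqnarray*}
I would then extract the weaker sub-chain
\begin{eqnarray*}
\text{scat}(\varphi) \leq \text{scat}(L) \leq \text{TC}(\varphi) \leq \text{TC}(L),
\end{eqnarray*}
by replacing $\min\{\text{TC}(L),\text{TC}_{n}(\varphi)\}$ with its first component $\text{TC}(L)$ and discarding the final $\text{TC}_{n}(L)$ term. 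This is valid purely because of the elementary fact $\min\{a,b\} \leq a$.

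Second, I would invoke Corollary 2.27 of \cite{BoratPamukVergili:2023}, which provides the unconditional inequality $\text{TC}(L) \leq \text{scat}(L^{2})$ for any edge-path connected simplicial complex $L$. Concatenating with the previous chain gives the asserted five-term inequality.

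No serious obstacle arises: the hypothesis that $\varphi$ is a bijective simplicial finite-fibration is exactly what Theorem \ref{thm2} demands, and the final inequality is intrinsic to $L$ and imposes no extra condition on $\varphi$. The only implicit assumption to keep in mind is the edge-path connectedness convention stated at the end of Section \ref{sec:1}, which is a standing hypothesis whenever scat and TC appear in the paper.
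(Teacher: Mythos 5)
Your proposal is correct and matches the paper's own argument: the paper states the corollary as an immediate consequence of Theorem \ref{thm2} combined with Corollary 2.27 of \cite{BoratPamukVergili:2023}, which is exactly your concatenation. Your explicit remark that one passes from $\min\{\text{TC}(L),\text{TC}_{n}(\varphi)\}$ to $\text{TC}(L)$ just fills in the trivial step the paper leaves implicit.
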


\section{Conclusion}
\label{conc:5}

We make significant strides in understanding the discrete topological complexity (TC) of surjective fibrations, as well as exploring related concepts such as the higher contiguity distance between simplicial maps and the higher discrete TC number. By rigorously computing the TC number of surjective fibrations and investigating their relationship with other topological measures such as scat, we uncover valuable insights into the computational and structural properties of these mappings. Our findings not only contribute to the theoretical understanding of topological complexity but also have practical implications in fields such as robotics, computational biology, and geometric modeling. The insights gained from our study can inform the design of efficient algorithms for motion planning, aid in the analysis of complex biological systems, and enhance computational representations of geometric structures.

Various versions of TC numbers exist in topological spaces, as in the case of higher topological complexity TC$_{n}$. Some of these are monoidal topological complexity, symmetric topological complexity, parametrized topological complexity, mixed topological complexity, and relative topological complexity. The computation of each of the versions of such numbers on the simplicial complexes can be considered an open problem. In addition, concepts such as barycentric subdivision, which belong to the simplicial complex theory, can also be examined in the continuation of this study.

\end{document}